\newtheorem{theorem}{Theorem}[section]
\newtheorem{lemma}[theorem]{Lemma}
\newtheorem{corollary}[theorem]{Corollary}
\theoremstyle{definition}
\newtheorem{example}[theorem]{Example}
\newtheorem{remark} [theorem] {Remark}
\begin{document}

\title{ Spectrum of Weighted Composition Operators \\
Part VII \\
Essential spectra of weighted composition operators
on $C(K)$. The case of non-invertible homeomorphisms.}

\author{Arkady Kitover}

\address{Community College of Philadelphia, 1700 Spring Garden St., Philadelphia, PA,
USA}

\email{akitover@ccp.edu}

\author{Mehmet Orhon}

\address{University of New Hampshire, Durham, NH, 03824}

\email{mo@unh.edu}

\subjclass[2010]{Primary 47B33; Secondary 47B48, 46B60}

\date{\today}

\keywords{Spectrum, Fredholm spectrum, essential
spectra}

\begin{abstract}
  We provide a complete description of the spectrum and the essential spectra of weighted composition operators $T=wT_\varphi$ on $C(K)$ in the case when the map $\varphi$ is a non-invertible homeomorphism of $K$  into itself.
 \end{abstract}
\maketitle

\markboth{Arkady Kitover and Mehmet Orhon}{Spectrum of weighted composition operators.
VII}

\section{Introduction}
Let $K$ be a compact Hausdorff space and $C(K)$ be the space of all complex-valued continuous functions on $K$. A weighted composition operator $T$ on $C(K)$ is an operator of the form
\begin{equation*}
  (Tf)(k)= w(k)f(\varphi(k), k \in K, f \in C(K),
\end{equation*}
where $\varphi$ is a continuous map of $K$ into itself and $w \in C(K)$.

The spectrum of arbitrary weighted composition operators on $C(K)$ was investigated by the first named author in~\cite[Theorems 3.10, 3.12, and 3.23]{Ki1}. On the other hand, the full description of \textbf{essential} spectra (in particular Fredholm and semi-Fredholm spectra) of such operators is, as far as we are informed, still not known. In a special case, when the map $\varphi$ is a homeomorphism of $K$ \textbf{onto} itself, such a description was obtained in~\cite[Theorems 2.7 and 2.11]{Ki3}. In this paper we provide a description of the spectrum (Theorem~\ref{t1}) and the essential spectra (Theorems~\ref{t2},~\ref{t3}, and~\ref{t4}) of a weighted composition operator $T = wT_\varphi$ in the case when $\varphi$ is a \textbf{non-surjective} homeomorphism of $K$ into itself.

\section{Preliminaries}

In the sequel we use the following standard notations.

\noindent $\mathds{N}$ is the semigroup of all natural numbers.

\noindent $\mathds{Z}$ is the ring of all integers.

\noindent $\mathds{R}$ is the field of all real numbers.

\noindent $\mathds{C}$ is the field of all complex numbers.

\noindent $\mathds{T}$ is the unit circle. We use the same notation for the unit circle
considered as a subset of the complex plane and as the group of all complex numbers
of modulus 1.

\noindent $\mathds{U}$ is the open unit disc.

\noindent $\mathds{D}$ is the closed unit disc.

All the linear spaces are considered over the field $\mathds{C}$ of complex numbers.

The algebra of all bounded linear operators on a Banach space $X$ is denoted by $L(X)$.

Let $E$ be a set and $\varphi : E \rightarrow E$ be a map of $E$ into itself. Then

\noindent $\varphi^n$ , $n \in \mathds{N}$, is the $n^{th}$ iteration of $\varphi$,

\noindent $\varphi^0(e) = e, e \in E$,

\noindent If $F \subseteq E$ then $\varphi^{(-n)}(F)$ means the full $n^{th}$ preimage of $F$, i.e.
$\varphi^{(-n)}(F)= \{e \in E : \varphi^n(e) \in F\}$.

\noindent If the map $\varphi$ is imjective then $\varphi^{-n}$, $n \in \mathds{N}$,  is the $n^{th}$ iteration of the inverse map $\varphi^{-1}$. In this case we will write $\varphi^{-n}(F)$ instead of $\varphi^{(-n)}(F)$.

\noindent Let $w$ be a complex-valued function on $E$. Then $w_0 = 1$ and $w_n = w(w \circ \varphi) \ldots (w \circ \varphi^{n-1})$, $n \in \mathds{N}$.

Recall that an operator $T \in L(X)$ is called \textit{semi-Fredholm} if its range
$R(T)$ is closed in $X$ and either $\dim{\ker{T}}< \infty$ or codim $R(T) < \infty$.

The \textit{index} of a semi-Fredholm operator $T$ is defined as

\centerline{ ind $T$ = $  \dim{\ker{T}}$ - $\mathrm{codim} \, R(T)$.}

The subset of $L(X)$ consisting of all semi-Fredholm operators is denoted by $\Phi$.

$\Phi_+ = \{T \in \Phi : null(T) = \dim{\ker{T}}< \infty\}$ is the set of all upper semi-Fredholm operators in $L(X)$.

$\Phi_- = \{T \in \Phi : def(T) = \textrm{codim}\; R(T) < \infty\}$ is the set of all lower semi-Fredholm operators in $L(X)$.

$\mathcal{F} = \Phi_+  \cap  \Phi_-$ is the set of all Fredholm operators in $L(X)$.

$\mathcal{W} = \{T \in \mathcal{F} : \textrm{ind} \; T = 0\}$ is the set of all Weyl
operators in $L(X)$.

Let $T$ be a bounded linear operator on a Banach space $X$. As usual, we denote the
spectrum of $T$ by $\sigma(T)$ and its spectral radius by $\rho(T)$.

We will consider the following subsets of $\sigma(T)$.

$\sigma_p(T) = \{\lambda \in \mathds{C} : \exists x \in X \setminus \{0\}, Tx = \lambda
x\}.$

$\sigma_{a.p.}(T) = \{\lambda \in \mathds{C}: \exists x_n \in X, \|x_n\| = 1, Tx_n -
\lambda x_n \rightarrow 0\}$.

$\sigma_r(T) = \sigma(T) \setminus \sigma_{a.p.}(T) =$

$\; \; = \{\lambda \in \sigma(T) : \textrm{the operator}\; \lambda I - T \; \textrm{has the left inverse}\} $.

\begin{remark} \label{r1} It is clear that $\sigma_{a.p.}(T)$ is the union of the point spectrum $\sigma_p(T)$ and the approximate point spectrum $\sigma_a(T)$ of $T$, while $\sigma_r(T)$ is the residual spectrum of $T$. We have to notice that the  definition of the residual spectrum varies in the literature.
\end{remark}

\begin{remark} \label{r2} If needed to avoid an ambiguity, we will use notations $\sigma(T,X)$, $\rho(T,X)$, et cetera.
  \end{remark}

Following~\cite{EE} we consider the following essential spectra of $T$.

$\sigma_1(T) = \{\lambda \in \mathds{C}: \lambda I - T \not \in \Phi\}$ is the
\textit{semi-Fredholm} spectrum of $T$.

$\sigma_2(T) = \{\lambda \in \mathds{C}: \lambda I - T \not \in \Phi_+\}$ is the upper \textit{semi-Fredholm} spectrum of $T$.

$\sigma_2(T^\prime)  = \{\lambda \in \mathds{C}: \lambda I - T \not \in \Phi_-\} $ is the lower \textit{semi-Fredholm} spectrum of $T$.

$\sigma_3(T) = \{\lambda \in \mathds{C}: \lambda I - T \not \in \mathcal{F}\}$ is the
Fredholm spectrum of $T$.

$\sigma_4(T) = \{\lambda \in \mathds{C}: \lambda I - T \not \in \mathcal{W}\}$ is the
Weyl spectrum of $T$.

$\sigma_5(T) = \sigma(T)\setminus \{\zeta \in \mathds{C} :$ there is a component $C$ of
the set $\mathds{C} \setminus \sigma_1(T)$ such that $\zeta \in C$ and the intersection
of $C$ with the resolvent set of $T$ is not empty$\}$ is the Browder spectrum of $T$.

The Browder spectrum was introduced in~\cite{Br} as follows: $\lambda \in \sigma(T) \setminus \sigma_5(T)$ if and only if $\lambda$ is a pole of the resolvent $R(\lambda, T)$. It is not difficult to see (~\cite[p. 40]{EE}) that the definition of $\sigma_5(T)$ cited above is equivalent to the original definition of Browder.

It is well known (see e.g.~\cite{EE}) that the sets $\sigma_i(T ), i \in [1, \ldots ,
5]$ are
nonempty closed subsets of $\sigma(T)$ and that
\begin{equation*}
  \sigma_i(T) \subseteq \sigma_j(T), 1 \leq i < j \leq 5,
\end{equation*}
where all the inclusions can be proper. Nevertheless all the
spectral radii $\rho_i(T ), i = 1, . . . , 5$ are equal to the same number,
$\rho_e(T)$, (see~\cite[Theorem I.4.10]{EE}) which is called the essential
spectral radius of $T$. It is also known (see~\cite{EE}) that the spectra $\sigma_i(T),
i = 1, \ldots , 4$ are invariant under compact perturbations, but $\sigma_5(T)$ in
general is not.

It is immediate to see that $\sigma_1(T) = \sigma_2(T) \cap \sigma_2(T^\prime)$ and that $\sigma_3(T) = \sigma_2(T) \cup \sigma_2(T^\prime)$.\

Let us recall that a sequence $x_n$ of elements of a Banach space $X$ is called \emph{singular} if it does not contain any norm convergent subsequence. We will use the following well known characterization of $\sigma_2(T)$ (see e.g.~\cite{EE}). The following statements are equivalent
\begin{enumerate}[(a)]
  \item $\lambda \in \sigma_2(T)$.
  \item There is a singular sequence $x_n$ such that $\|x_n\| = 1$ and $\lambda x_n - Tx_n \rightarrow 0$.
\end{enumerate}

\section{The spectrum of $T = wT_\varphi$}

Let $K$ be a compact Hausdorff space, $\varphi$ be a homeomorphism of $K$ into itself, and $w \in C(K)$. We consider the weighted composition operator $T = wT_\varphi$ on $C(K)$ defined as
\begin{equation}\label{eq1}
  (Tf)(k) = w(k)f(\varphi(k)), \; f \in C(K), \; k \in K.
\end{equation}

By the reasons outlined in the introduction we will always assume that
\begin{equation}\label{eq2}
  \varphi(K) \subsetneqq K.
\end{equation}

We have to introduce some additional notations.

\begin{equation}\label{eq3}
  L = \bigcap \limits_{n=0}^\infty \varphi^n(K), \; M = K \setminus Int_K L, \; N = L \setminus Int_K L.
\end{equation}

\noindent Obviously, $\varphi$ is a homeomorphism of $L$ and $N$ onto themselves and~(\ref{eq1}) defines the action of $T$ on the spaces $C(L)$, $C(M)$, and $C(N)$.

\begin{theorem} \label{t1} Let $K$ be a compact Hausdorff space, $\varphi$ be a homeomorphism of $K$ into itself, and $w \in C(K)$. Let $T$ be the operator on $C(K)$ defined by~(\ref{eq1}). Assume~(\ref{eq2}) and notations in~(\ref{eq3}). Then
\begin{enumerate}[(I)]
  \item $\sigma(T, C(M))$ is either the disk $\rho(T, C(M))\mathds{U}$ or the singleton $\{0\}$.
  \item $\sigma(T) = \sigma(T, C(M)) \cup \sigma(T, C(L))$.
 \end{enumerate}
 \end{theorem}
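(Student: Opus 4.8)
The plan is to analyze the operator $T=wT_\varphi$ by splitting the dynamics of $\varphi$ into the "transient" part near $M=K\setminus \mathrm{Int}_K L$ and the "recurrent" part on $L=\bigcap_n\varphi^n(K)$, on which $\varphi$ is a homeomorphism onto itself. For part (I), I would first observe that $C(M)$ carries a well-defined action of $T$ because $\varphi(M)\subseteq M$ (since $\varphi$ maps $K\setminus\mathrm{Int}_K L$ into itself — this needs a short topological check using $\varphi(L)=L$ and continuity of $\varphi$). The key structural fact is that on $M$ the map $\varphi$ is a genuinely non-surjective homeomorphism with no recurrence: iterating $\varphi$ pushes points toward $L$, so the iterates $\varphi^n$ eventually leave any fixed neighborhood in $M$. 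I would exploit this to show that $T$ restricted to $C(M)$ is "quasinilpotent-like" modulo scaling: more precisely, for any $f\in C(M)$ vanishing appropriately, $\|T^n f\|^{1/n}\to$ something controlled by $\limsup|w_n|^{1/n}$, and conversely one builds approximate eigenfunctions supported near the "entry points" of the transient region. The dichotomy — either the full disk $\rho(T,C(M))\mathds{U}$ or $\{0\}$ — should come from the fact that $\sigma(T,C(M))$ is invariant under rotations (a standard consequence of the composition-operator structure: conjugating by multiplication by a unimodular function related to a "cocycle" rotates the spectrum, or more simply because there are no periodic points of $\varphi$ in the relevant part of $M$, forcing rotational symmetry of the approximate point spectrum) together with the fact that $\sigma(T,C(M))$ is always connected and contains $0$ (as $T|_{C(M)}$ cannot be invertible — $\varphi$ being non-surjective on $M$ means $T_\varphi$ has no bounded inverse, and one checks $0\in\sigma$). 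A rotation-invariant compact connected subset of $\mathds C$ containing $0$ with spectral radius $r$ is either $r\mathds U$... wait, it should be the closed disk or a union of circles; the extra input that pins it to the closed disk (not an annulus) is precisely that $0$ is in the spectrum and connectedness, or alternatively that the resolvent set meets every punctured disk — here one invokes~\cite[Theorems 3.10, 3.12, 3.23]{Ki1} or reproves the relevant piece. If $r=0$ we get $\{0\}$.

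For part (II), the inclusion $\sigma(T,C(M))\cup\sigma(T,C(L))\subseteq\sigma(T,C(K))$ and the reverse inclusion both follow from a short exact sequence argument. I would use the restriction map $R:C(K)\to C(L)$, which is surjective with kernel $J=\{f\in C(K):f|_L=0\}$, an ideal invariant under $T$ since $\varphi(L)=L$. Then $T$ on $C(K)$ induces $T|_L$ on the quotient $C(K)/J\cong C(L)$ and $T|_J$ on $J$. The spectral relation $\sigma(T,C(K))\subseteq\sigma(T|_J)\cup\sigma(T,C(L))$ is automatic from this exact sequence (invertibility of $\lambda I-T$ on both sub and quotient gives invertibility on the whole, by a standard diagram chase / lifting of inverses). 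The reverse inclusions $\sigma(T,C(L))\subseteq\sigma(T,C(K))$ and $\sigma(T|_J)\subseteq\sigma(T,C(K))$ also hold: the first because any approximate eigenfunction on $L$ extends (Tietze) to one on $K$ up to small error controlled by working in a shrinking neighborhood of $L$, the second because $J$ is a $T$-invariant subspace. The remaining task is to replace $\sigma(T|_J)$ by $\sigma(T,C(M))$. The idea is that $J$ decomposes, up to the part "at the boundary $N=L\setminus\mathrm{Int}_K L$," as functions living on $M$; more carefully, $C(M)$ is itself a quotient of $C(K)$ (restriction to $M$), with kernel the functions vanishing on $M$, i.e. vanishing on $K\setminus\mathrm{Int}_K L$, which are supported in $\mathrm{Int}_K L\subseteq L$ — and on that ideal $T$ acts like $T|_{C(L)}$ modulo boundary effects. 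So $\sigma(T|_J)\subseteq\sigma(T,C(M))\cup\sigma(T,C(L))$, and combined with $\sigma(T,C(M))\subseteq\sigma(T,C(K))$ (again via the quotient $C(K)\to C(M)$, using $\varphi(M)\subseteq M$) we close the loop.

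The main obstacle I anticipate is the bookkeeping around the boundary set $N$: the decompositions $C(K)\to C(L)$, $C(K)\to C(M)$ overlap exactly on $C(N)$, and one must ensure that $\sigma(T,C(N))\subseteq\sigma(T,C(L))$ (true, since $N\subseteq L$ and $N$ is $\varphi$-invariant, so $C(N)$ is a quotient of $C(L)$) so that the boundary part is absorbed and contributes nothing new to the union. A second delicate point is proving the rotation-invariance of $\sigma(T,C(M))$ rigorously in the non-invertible setting; the clean way is to note that $M$ contains no periodic points of $\varphi$ at all — any periodic point would lie in $\bigcap\varphi^n(K)=L$ and in fact in $\mathrm{Int}_K L$ if $\varphi$ is open near it, hence not in $M$ — and then invoke the general principle (as in~\cite{Ki1}) that for weighted composition operators whose underlying map is free of periodic points, the approximate point spectrum, and in fact the whole spectrum, is circularly symmetric. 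Assembling these pieces in the right order, with the exact-sequence spectral inclusions stated as a lemma, should give a clean proof; I would present part (II) first since part (I) can then be read as a description of one of the two pieces appearing in (II).
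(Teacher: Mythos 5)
Your overall architecture (split the dynamics into the transient part on $M$ and the recurrent part on $L$, cite \cite{Ki1} for the disk/singleton dichotomy in (I), and run a sub/quotient spectral analysis for (II)) matches the paper's, but the proof of (II) has a genuine gap at its central point. You assert that $\sigma(T|_J)\subseteq\sigma(T,C(K))$ holds ``because $J$ is a $T$-invariant subspace,'' and likewise that $\sigma(T,C(M))\subseteq\sigma(T,C(K))$ follows ``via the quotient $C(K)\to C(M)$.'' Neither implication is valid for general operators: the restriction of an invertible operator to an invariant subspace need not be invertible, and the induced operator on a quotient by an invariant subspace need not be invertible either (the bilateral shift on $\ell^2(\mathds{Z})$ with the invariant subspace $\ell^2(\mathds{N})$ kills both claims at once --- the spectrum of the whole operator is $\mathds{T}$ while the restriction and the quotient operator each have spectrum $\mathds{D}$). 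The only inclusions that come for free from the exact sequence are $\sigma(T,C(K))\subseteq\sigma(T|_J)\cup\sigma(T,C(L))$ and the fact that surjectivity of $\lambda I-T$ passes to quotients. The whole content of Theorem~\ref{t1}(II) is precisely the two non-automatic inclusions, and the paper proves them using the specific structure of $wT_\varphi$: if $\lambda I-T$ is surjective on $C(K)$ it is surjective on the quotients $C(M)$ and $C(L)$, so $\lambda\in\sigma(T,C(M))$ (say) forces a genuine eigenfunction $f$ there; \cite[Lemma 3.6]{Ki1} then produces a point $k$ with $|w_n(k)|\ge1$ and $|w_n(\varphi^{-n}(k))|\le1$ for all $n$, and the orbit construction of \cite[Theorem 3.7]{Ki1} puts the entire circle $\lambda\mathds{T}$ inside $\sigma(T)$, contradicting $\lambda\notin\sigma(T)$. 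Your proposal contains no substitute for this mechanism; the ``Tietze extension of approximate eigenfunctions from $L$'' also does not work as stated, both because $\lambda$ may lie in the residual spectrum of $T$ on $C(L)$ and because an arbitrary extension destroys the approximate eigenvector property off $L$.

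A second, smaller error: your claim that $M$ contains no $\varphi$-periodic points is false. Periodic points lie in $L$, but not necessarily in $\mathrm{Int}_K L$; for example $K=\{0\}\cup\{1/n:n\in\mathds{N}\}$ with $\varphi(1/n)=1/(n+1)$, $\varphi(0)=0$ gives $L=\{0\}$, $M=K$, and a fixed point in $M$. The rotation-invariance needed for (I) and for the circle argument in (II) cannot be based on absence of periodic points; the paper instead uses that, because $\varphi(K)\subsetneq K$, the relevant point of $M$ is either non-periodic or a \emph{limit} of non-periodic points, which is the actual hypothesis under which the results of \cite{Ki1} yield circular symmetry. Finally, note that in case the eigenfunction on $C(L)$ vanishes identically on $N=L\setminus\mathrm{Int}_K L$ one must extend it by zero to all of $K$ to contradict invertibility directly; this boundary case is handled explicitly in the paper and is absent from your outline.
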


 \begin{proof} (I) follows from~(\ref{eq2}) and Theorems 3.10 and 3.12 in~\cite{Ki1}.

 The proof of (II) will be divided into several steps.

 \noindent Step 1. We will prove the inclusion $\sigma(T, C(M)) \subseteqq \sigma(T)$. Assume to the contrary that there is a $\lambda \in \mathds{C}$, $\lambda \in \sigma(T, C(M)) \setminus \sigma(T)$. Because $0 \in \sigma(T)$, we can assume without loss of generality that $\lambda = 1$. Then $(I-T)C(K) = C(K)$ and because $\varphi(Int_K L) = Int_K L = \varphi^{-1}(Int_K L)$ we also have $(I-T)C(M) = C(M)$. Because $1 \in \sigma(T,C(M))$ there is an $f \in
C(M)$ such that $f \neq 0$ and $Tf = f$.  Then it follows from Lemma 3.6  in ~\cite{Ki1} that there is a point $k \in M$ such that
\begin{equation}\label{eq4}
   |w_n(k)| \geq 1, \; |w_n(\varphi^{-n})| \leq 1, n \in \mathds{N}.
\end{equation}
The point $k$ is either not $\varphi$-periodic or, in virtue of~(\ref{eq2}), a limit point of the set of all non $\varphi$-periodic points in $K$. It follows from the proof of Theorem 3.7 in~\cite{Ki1} that $\mathds{T} \subset \sigma(T)$, in contradiction with our assumption.

\noindent Step 2. On this step we prove the inclusion $\sigma(T, C(L)) \subseteqq \sigma(T)$.
  Let $\lambda \in \sigma(T,C(L) \setminus \sigma(T)$. We can assume that $\lambda = 1$, and like on the previous step $(I-T)C(K) = C(K)$ implies that $(I-T)C(L) = C(L)$. Therefore there is an $f \in C(L)$, $f \neq 0$, such that $Tf = f$. Consider two possibilities.

(a) $f \not \equiv 0$ on $L \setminus Int_K L$. Let $k \in L \setminus Int_K L$ be such that $|f(k)| = \max \limits_{L \setminus Int_K L} |f|$. Then like on step 1 we see that $\mathds{T} \subseteq \sigma(T)$.

(b) $f \equiv 0$ on $L \setminus Int_K L$. We will define the function $\tilde{f} \in C(K)$ as
\begin{equation*}
\tilde{f}(k) = \begin{cases} f(k) &\mbox{if } k \in L \\
0 & \mbox{if } k \in K \setminus L \end{cases} .
\end{equation*}
Then $T\tilde{f} = \tilde{f}$, and $1 \in \sigma(T)$ contrary to our assumption.

  Combining steps 1 and 2 we see that $ \sigma(T, C(M)) \cup \sigma(T, C(L)) \subseteqq \sigma(T)$.

  \noindent Step 3. We prove the inclusion $ \sigma(T) \subseteqq \sigma(T, C(M)) \cup \sigma(T, C(L))$.
   Let $\lambda \in \sigma(T)$. If $\lambda = 0$ then $\lambda \in D$ and therefore without loss of generality we can assume that $\lambda = 1$.

Consider first the case when $1 \in \sigma_{ap}(T)$. Then there is a sequence $f_n \in C(K)$, $\|f_n\|=1$ and $f_n \mathop \rightarrow \limits_{n \to \infty} 0$. But then clearly either $\|f_n\|_{C(L)} \not \rightarrow 0$ or $\|f_n\|_{C(M)} \not \rightarrow 0$, and therefore
\begin{equation*}
 1 \in \sigma_{ap}(T, C(L) \cup \sigma_{ap}(T,C(M)) \subseteq D \cup \sigma(T, C(L)).
\end{equation*}
If on the other hand $1 \in \sigma_r(T,C(K))$ then there is a regular nonzero Borel measure $\mu$ on $K$, $\mu \in C(K)^\prime$, such that $T^\prime \mu = \mu$. It is easy to see that $supp(\mu) \subseteq L$ whence $1 \in \sigma(T,C(L))$.
 \end{proof}

 \section{Some axillary results}

 To obtain a description of the essential spectra of $T$ we will need a series of lemmas. In the statements of all of the lemmas we will assume, without mentioning it explicitly, that $T$ is an operator on $C(K)$ defined by~(\ref{eq1}), that $\varphi$ is a homeomorphism of $K$ into itself, and that~(\ref{eq2}) holds. We will also assume notations from~(\ref{eq3}).

 \begin{lemma} \label{l1} Assume that $T$ is invertible on $C(L)$ and that $0 <|\lambda| < 1/\rho(T^{-1}, C(L))$. Then $(\lambda I - T)C(K) = C(K)$.
\end{lemma}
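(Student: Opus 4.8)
The plan is to work with the adjoint: by the closed range theorem, $(\lambda I-T)C(K)=C(K)$ is equivalent to $\lambda I-T'$ being bounded below on $C(K)'$, i.e. on the space $M(K)$ of regular Borel measures on $K$. Here $T'\mu=\varphi_{*}(w\mu)$, and more generally $(T')^{n}\mu=\varphi^{n}_{*}(w_{n}\mu)$, where $\varphi^{n}_{*}$ denotes push-forward; this is just the dualization of $T^{n}=w_{n}T_{\varphi^{n}}$. Since $\varphi^{n}$ is a homeomorphism of $K$ onto the compact set $\varphi^{n}(K)$, push-forward preserves total variation, so $\|(T')^{n}\mu\|=\|w_{n}\mu\|=\int_{K}|w_{n}|\,d|\mu|$, and the measure $(T')^{n}\mu$ is carried by $\varphi^{n}(K)$.

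Suppose, for a contradiction, that $\lambda I-T'$ is not bounded below, and pick $\mu_{m}\in M(K)$ with $\|\mu_{m}\|=1$ and $(\lambda I-T')\mu_{m}\to0$. From the telescoping identity $\lambda^{n}\mu_{m}-(T')^{n}\mu_{m}=\bigl(\sum_{j=0}^{n-1}\lambda^{n-1-j}(T')^{j}\bigr)(\lambda I-T')\mu_{m}$ I would conclude that for each fixed $n$ one has $(T')^{n}\mu_{m}=\lambda^{n}\mu_{m}+\delta_{m}^{(n)}$ with $\|\delta_{m}^{(n)}\|\to0$ as $m\to\infty$. Two consequences follow. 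First, since $(T')^{n}\mu_{m}$ vanishes on every Borel subset of $K\setminus\varphi^{n}(K)$, we get $|\lambda|^{n}\,|\mu_{m}|(K\setminus\varphi^{n}(K))\le\|\delta_{m}^{(n)}\|\to0$; as $\varphi^{n}(K)\downarrow L$, for each open $V\supseteq L$ one may fix $n$ with $\varphi^{n}(K)\subseteq V$ and deduce $|\mu_{m}|(K\setminus V)\to0$ — the masses concentrate at $L$. Second, $\int_{K}|w_{n}|\,d|\mu_{m}|=\|(T')^{n}\mu_{m}\|\to|\lambda|^{n}$ as $m\to\infty$, because $\|\mu_{m}\|=1$.

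Combining these: for any open $V\supseteq L$, $\int_{V}|w_{n}|\,d|\mu_{m}|=\int_{K}|w_{n}|\,d|\mu_{m}|-\int_{K\setminus V}|w_{n}|\,d|\mu_{m}|\to|\lambda|^{n}$, the subtracted integral being at most $\|w_{n}\|_{C(K)}\,|\mu_{m}|(K\setminus V)\to0$; on the other hand $\int_{V}|w_{n}|\,d|\mu_{m}|\ge(\inf_{\overline{V}}|w_{n}|)\,|\mu_{m}|(V)\to\inf_{\overline{V}}|w_{n}|$. Hence $|\lambda|^{n}\ge\inf_{\overline{V}}|w_{n}|$ for every open $V\supseteq L$, and since $|w_{n}|$ is continuous and $L$ compact, letting $V$ shrink to $L$ yields $|\lambda|^{n}\ge\inf_{L}|w_{n}|$ for every $n$. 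Finally, invertibility of $T$ on $C(L)$ forces $w$ to be non-vanishing on $L$, and a direct computation of the iterates of $T^{-1}$ on $C(L)$ gives $\|(T^{-1})^{n}\|_{C(L)}=\sup_{t\in L}1/|w_{n}(t)|=1/\inf_{L}|w_{n}|$, so $\rho(T^{-1},C(L))=\lim_{n}(\inf_{L}|w_{n}|)^{-1/n}$. Therefore $|\lambda|\ge\lim_{n}(\inf_{L}|w_{n}|)^{1/n}=1/\rho(T^{-1},C(L))$, contradicting $0<|\lambda|<1/\rho(T^{-1},C(L))$. Thus $\lambda I-T'$ is bounded below and $(\lambda I-T)C(K)=C(K)$.

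The step I expect to require the most care is recognizing that the argument must be run simultaneously at every iterate $(T')^{n}$ and every product $w_{n}$, rather than only at $n=1$: it is precisely the family of $\liminf$-inequalities $|\lambda|^{n}\ge\inf_{L}|w_{n}|$ that, after taking $n$-th roots and passing to the limit, produces the quantity $1/\rho(T^{-1},C(L))=\lim_{n}(\inf_{L}|w_{n}|)^{1/n}$ that the hypothesis controls. The supporting topological facts used along the way — that $\varphi^{n}(K)$ decreases to $L$, that $\bigcap\{\overline{V}:V\supseteq L\text{ open}\}=L$, and that $\sup_{V}\inf_{\overline{V}}|w_{n}|=\inf_{L}|w_{n}|$ — are routine consequences of compactness and normality of $K$ (the last via Urysohn's lemma applied to the open set $\{|w_{n}|>\inf_{L}|w_{n}|-\varepsilon\}\supseteq L$).
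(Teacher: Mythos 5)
Your proof is correct, but it takes a genuinely different route from the paper's. Both arguments begin identically, by reducing surjectivity of $\lambda I - T$ to the statement that $\lambda I - T'$ is bounded below on $C(K)'$, and both assume a norm-one sequence $\mu_m$ with $(\lambda I - T')\mu_m \to 0$ to derive a contradiction. From there the paper argues qualitatively: it uses the fact that $T'$ preserves disjointness to pass to the moduli, obtaining $|T'||\mu_m| - |\lambda||\mu_m| \to 0$, then extracts a weak$^\star$ limit point $\mu$ of the probability measures $|\mu_m|$, so that $|T'|\mu = |\lambda|\mu$ exactly; since such an eigenmeasure must be supported on $L$, this places $|\lambda|$ in $\sigma(|T|, C(L))$, contradicting invertibility of $|T|$ on $C(L)$ together with $\rho(|T|^{-1}, C(L)) = \rho(T^{-1}, C(L))$. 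You instead run a quantitative argument at every iterate: the telescoping identity gives $(T')^n\mu_m = \lambda^n\mu_m + o(1)$, from which you deduce both that the mass of $|\mu_m|$ concentrates on $L$ (since $(T')^n\mu_m$ is carried by $\varphi^n(K)\downarrow L$) and that $\int |w_n|\,d|\mu_m| \to |\lambda|^n$, yielding $|\lambda|^n \geq \inf_L |w_n|$ for all $n$ and hence $|\lambda| \geq 1/\rho(T^{-1},C(L))$ via the explicit formula $\|T^{-n}\|_{C(L)} = (\inf_L|w_n|)^{-1}$. Your approach buys self-containedness: it avoids the lattice machinery (the modulus $|T'|$ and the disjointness-preserving property, which the paper imports from an earlier work) and avoids the weak$^\star$ compactness extraction, at the cost of a longer computation that must be carried out uniformly over all iterates. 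All the supporting steps you flag — the compactness argument giving $\varphi^n(K)\subseteq V$ for large $n$, the normality argument giving $\sup_V \inf_{\overline V}|w_n| = \inf_L|w_n|$, and the identification of $\rho(T^{-1},C(L))$ — check out.
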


\begin{proof} It is enough to prove that the operator $\lambda I - T^\prime$ is bounded from below, where $T^\prime$ is the Banach dual of $T$. Assume to the contrary that there is a sequence $\mu_n \in (C(K))^\prime$ such that $\|\mu_n\| =1$ and $T^\prime \mu_n - \lambda \mu_n \rightarrow 0$. Because the operator $T^\prime$ preserves disjointness (see e.g.~\cite[Lemma 5.13]{Ki3}) we have $|T^\prime||\mu_n| - |\lambda||\mu_n| \rightarrow 0$. Let $\mu \in C(K)^\prime$ be a limit point of the set $\{|\mu_n|\}$ in the weak$^\star$ topology. Then $\mu$ is a probability measure on $K$. Because the operator $|T^\prime|= |T|^\prime$ is weak$^\star$ continuous we have $|T^\prime| \mu = |\lambda|\mu$. But then $supp( \mu) \subseteqq L$ whence $|\lambda| \in \sigma(|T|, C(L))$. The last statement involves a contradiction because the operator $|T|$ is invertible on $C(L)$ and $\rho(|T|^{-1},C(L)) = \rho(T^{-1},C(L))$.
\end{proof}

\begin{lemma} \label{l2}  (1) Let $\lambda \in \sigma_{ap}(T, C(N))$. Then $\lambda \mathds{T} \subseteq \sigma_2(T)$.

\noindent (2) Let $\lambda \in \sigma_{ap}(T^\prime, C^\prime(N))$. Then $\lambda \mathds{T} \subseteq \sigma_2(T^\prime)$.

\end{lemma}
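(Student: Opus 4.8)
The plan is to prove (1); part (2) then follows by the same argument applied to the disjointness-preserving operator $T'$ on $C(N)' = C^\ast(N)$, using the characterization of $\sigma_2$ by singular sequences and the fact that $\varphi$ is a genuine homeomorphism of $N$ onto $N$. For (1), fix $\lambda \in \sigma_{ap}(T, C(N))$ and a unimodular $\zeta \in \mathds{T}$; we must produce a singular sequence $g_m \in C(K)$ with $\|g_m\| = 1$ and $(\lambda\zeta I - T)g_m \to 0$. Since $\varphi$ restricted to $N$ is a surjective homeomorphism, $T$ acting on $C(N)$ is exactly the kind of operator treated in~\cite{Ki3}; the point of the lemma is that the rotational invariance of the approximate point spectrum on the ``invertible core'' $N$ propagates to the semi-Fredholm spectrum on all of $C(K)$.

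First I would realize the approximate eigenfunctions on $N$: choose $f_n \in C(N)$ with $\|f_n\|_{C(N)} = 1$ and $\|\lambda f_n - T f_n\|_{C(N)} \to 0$. The standard rotation trick (as in the proof of Theorem~2.7 in~\cite{Ki3}) is to multiply $f_n$ by a carefully chosen continuous unimodular function $u_n$ on $N$ built from the cocycle $w_n$ and powers of $\zeta$, so that $u_n f_n$ is an approximate eigenfunction for the eigenvalue $\lambda\zeta$; here one uses that the points where $|f_n|$ is near its maximum are, by~(\ref{eq2}), non-periodic or limits of non-periodic points, so a $\mathds{T}$-worth of phases is available. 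Concretely, one localizes: passing to a subsequence, the maximum of $|f_n|$ is attained near a point $k_n$, and after a further reduction one may assume the relevant portion of the orbit of $k_n$ stays in a small neighborhood on which one can continuously adjust phases. This gives $h_n \in C(N)$, $\|h_n\|_{C(N)} = 1$, $\|\lambda\zeta h_n - T h_n\|_{C(N)} \to 0$, and — crucially — one arranges that $h_n$ is a \emph{singular} sequence in $C(N)$ (the supports can be shrunk so no subsequence converges in sup-norm), which is exactly what $\sigma_2(T, C(N))$ membership supplies via the singular-sequence characterization quoted in Section~2.

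Next I would extend $h_n$ off $N$. Since $N = L \setminus Int_K L$ is closed in $K$, Tietze's theorem gives $\tilde h_n \in C(K)$ with $\tilde h_n|_N = h_n$ and $\|\tilde h_n\| = 1$; but $T\tilde h_n$ need not approximate $\lambda\zeta\tilde h_n$ away from $N$, so one must cut $\tilde h_n$ down to a small $K$-neighborhood $V_n$ of (the relevant part of) $N$ by multiplying by a bump function that is $1$ on a neighborhood of the support of $h_n$ and $0$ outside $V_n$. The key computation is that $(Tg)(k) = w(k) g(\varphi(k))$ depends only on the values of $g$ near $\varphi(k)$; choosing $V_n$ so that $\varphi(V_n)$ is still inside the region where the bump equals $1$ — possible because $\varphi$ is continuous and injective and $\varphi(N) = N$ — makes the error $(\lambda\zeta I - T)g_n$ on all of $K$ equal to its restriction to $\overline{V_n}$ plus a term controlled by the modulus of continuity of $w$ and the size of $V_n$, hence $\to 0$. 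The resulting $g_n$ are singular in $C(K)$ because their restrictions to $N$ are singular in $C(N)$.

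The main obstacle is the gluing step: ensuring simultaneously that the cut-off functions do not destroy the approximate-eigenvalue relation near $\partial V_n$ (where the bump transitions) and that $\varphi(V_n) \subseteq V_n$ — or at least that $\varphi$ maps the ``core'' of $V_n$ into the region where the cut-off is $1$. This is where one genuinely uses that $\varphi$ is a homeomorphism \emph{of $N$ onto $N$} (not merely into $K$): near $N$ the dynamics are invertible, so one can choose $V_n$ forward-invariant up to the needed order by taking it inside $\bigcap_{j=0}^{m} \varphi^{-j}$ of a fixed neighborhood of $N$, and the finitely many iterates needed are controlled because the approximate eigenfunctions $f_n$ already ``live'' essentially on a bounded stretch of orbit. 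Once $V_n$ is chosen this way, the error estimate is routine: $\|(\lambda\zeta I - T)g_n\|_{C(K)} \le \|(\lambda\zeta I - T)h_n\|_{C(N)} + \varepsilon_n$ with $\varepsilon_n \to 0$ coming from uniform continuity of $w$ and shrinking $V_n$. This yields $\lambda\zeta \in \sigma_2(T)$, and since $\zeta \in \mathds{T}$ was arbitrary, $\lambda\mathds{T} \subseteq \sigma_2(T)$.
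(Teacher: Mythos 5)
There is a genuine gap, and it sits at the very first step: you assume that from $\lambda \in \sigma_{ap}(T,C(N))$ you can extract approximate eigenfunctions $h_n \in C(N)$ that form a \emph{singular} sequence, invoking ``the singular-sequence characterization.'' But that characterization describes $\sigma_2$, not $\sigma_{ap}$; the hypothesis of the lemma is only membership in the approximate point spectrum of $T$ on $C(N)$, which by itself supplies a normalized sequence $f_n$ with $Tf_n - \lambda f_n \to 0$ and nothing more. If, say, $\lambda$ were an isolated eigenvalue of finite multiplicity of $T$ on $C(N)$, no singular approximate eigenvector sequence would exist in $C(N)$ at all, yet the lemma still asserts $\lambda\mathds{T} \subseteq \sigma_2(T)$. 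So the singularity cannot be manufactured inside $C(N)$, and the whole point of the lemma --- that mere $\sigma_{ap}$-membership on $N$ forces $\sigma_2$-membership on $K$ --- is lost in your plan. The mechanism you are missing is the specific structure of $N = L \setminus Int_K L$: every point of $N$ is a limit of points of $K \setminus L$, and $K \setminus L$ contains no $\varphi$-periodic points. The paper first converts $\lambda \in \sigma_{ap}(T,C(N))$ into a pointwise cocycle condition ($|w_n(k)| \ge 1$, $|w_n(\varphi^{-n}(k))| \le 1$ at some $k \in N$, via Lemma 3.6 of~\cite{Ki1}), then approximates finite pieces of the two-sided orbit of $k$ by orbit segments of points $k_n \in K\setminus L$ chosen so that the segments $A_n$ are pairwise disjoint and the cocycle bounds survive up to a factor of $2$. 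The approximate eigenvectors are then weighted sums of point masses along these segments, taken in the bidual $C''(K)$ (resp.\ Dirac measures in $C'(K)$ for part (2)); disjointness of the $A_n$ gives singularity for free, the phases $\alpha^{-i}$ give every $\alpha \in \lambda\mathds{T}$ at once, and $\sigma_2(T'') = \sigma_2(T)$ closes the argument.

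Your second step (Tietze extension plus cut-offs) inherits the same difficulty and adds new ones: working with genuinely continuous functions on $K$ you would need bump functions at each orbit point with controlled interaction under $T$, and you would still have no source of singularity if all your approximate eigenfunctions concentrate at the same point of $N$. The bidual/point-mass device is not a cosmetic choice here; it is what makes both the gluing and the singularity trivial. I would recommend restructuring the proof around the cocycle characterization of $\sigma_{ap}$ and the disjoint orbit segments in $K\setminus L$, rather than trying to transplant continuous approximate eigenfunctions from $C(N)$ to $C(K)$.
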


\begin{proof} We divide the proof into four steps.

 (I) Let $\lambda = 0 \in \sigma_{ap}(T,C(N))$. Then the weight $w$ takes value $0$ on $N$. It follows from the definition of $N$ that there are pairwise distinct points $k_n \in K$ such that $|w(k_n)| \leq 1/n$. Let $u_n$ be the characteristic function of the singleton $\{k_n\}$. Then $u_n \in C^{\prime \prime}(K)$, $\|u_n\|=1$, the sequence $u_n$ is singular, and $T^{\prime \prime}u_n \rightarrow 0$. Thus $0 \in \sigma_2(T^{\prime \prime}) = \sigma_2(T)$.

(II) Let $0 \in \sigma_{ap}(T^\prime, C^\prime(N))$. Because $T^\prime = (T_\varphi)^\prime w^\prime$ and $(T_\varphi)^\prime$ is an isometry of $C^\prime(N)$ the weight $w$ takes value $0$ on $N$. Let $k_n$ be as in part (I) of the proof and $\delta_n$ be the Dirac measure corresponding to the point $k_n$. Then the sequence $\delta_n$ is singular and $T^\prime \delta_n \rightarrow 0$.

(III) Let $\lambda \in \sigma_{ap}(T,C(N))$ and $\lambda \neq 0$.  Without loss of generality we can assume that $|\lambda| = 1$. Recall that the restriction of $\varphi$ on $N$ is a homeomorphism of $N$ onto itself. Therefore by~\cite[Lemma 3.6]{Ki1} there is a point $k \in N$ such that $|w_n(k)| \geq 1$ and $|w_n(\varphi^{-n}(k))| \leq 1$, $n \in \mathds{N}$. Let us fix an $m \in \mathds{N}$. From the definition of the set $N$ follows that there is a net $\{k_\alpha\}$ of points in $K \setminus L$ convergent to $\varphi^{-m}(k)$. From this trivial observation and from the fact that $K \setminus L$ does not contain $\varphi$-periodic points easily follows the existence of points $k_n \in K \setminus L, n \in \mathds{N}$ with the properties.
\begin{enumerate}[(a)]
  \item The points $\varphi^i(k_n), -n-1 \leq i \leq n+1$ are pairwise distinct.
  \item The sets $A_n = \{\varphi^i(k_n), -n-1 \leq i \leq n+1\}$ are pairwise disjoint.
  \item For any $n \in \mathds{N}$ the following inequalities hold
   \begin{equation}\label{eq5}
     |w_i(k_n)| \geq 1/2 \; \text{and} \; |w_i(\varphi^{-i}(k_n))| \leq 2.
   \end{equation}
\end{enumerate}
Let $u_n$ be the characteristic function of the singleton $\{\varphi^n(k_n)\}$. Then $u_n \in C^{\prime \prime}(K)$. Let us fix $\alpha \in \mathds{C}$ such that $|\alpha| = 1$. Consider $F_n \in C^{\prime \prime}(K)$,
\begin{equation} \label{eq6}
 F_n = \sum \limits_{i=0}^{2n} \big{(} 1 - \frac{1}{\sqrt{n}}\big{)}^{|i-n|}
\alpha^{-i} (T^{\prime \prime})^i u_n.
\end{equation}
  It follows from~(\ref{eq5}) and~\ref{eq6} by the means of a simple estimate (see also~\cite[ Proof of Theorem 3.7]{Ki1}) that
 \begin{equation} \label{eq7}
 \|T^{\prime \prime} F_n - \alpha F_n \| = o(\|F_n\|), n \rightarrow \infty.
 \end{equation}
 Condition (b) guarantees that the sequence $F_n$ is singular and therefore~(\ref{eq7}) implies that
 \begin{equation*}
  \alpha \in \sigma_2(T^{\prime \prime}) = \sigma_2(T).
  \end{equation*}

 (IV) Let $\lambda \in \sigma_{ap}(T^\prime, C^\prime(N))$. It was proved in~\cite{Ki1} that there is a point $k \in N$ such that $|w_n(k)| \leq 1$ and $|w_n(\varphi^{-n}(k))| \geq 1$. Then we can find points $k_n \in K \setminus L$ satisfying conditions (a) and (b) above and also the following condition
 \begin{equation}\label{eq8}
   |w_i(k_n)| \leq 2 \; \text{and} \; |w_i(\varphi^{-i}(k_n))| \geq 1/2, \; n \in \mathds{N}.
 \end{equation}
  Let $\nu_n$ be the Dirac measure $\delta_{\varphi^{-n}(k_n)}$, $\alpha \in \mathds{T}$, and
  \begin{equation}\label{eq9}
     \mu_n = \sum \limits_{i=0}^{2n} \big{(} 1 - \frac{1}{\sqrt{n}}\big{)}^{|i-n|}
\alpha^{-i} (T^{\prime})^i \nu_n,
  \end{equation}
   It follows from~(\ref{eq8}) and~(\ref{eq9} that
 \begin{equation} \label{eq10}
   \|T^{\prime} \mu_n - \alpha \mu_n \| = o(\|\mu_n\|), n \rightarrow \infty
   \end{equation}
 Condition (b) guarantees that the sequence $\mu_n$ is singular, and therefore~(\ref{eq10}) implies that $\alpha \in \sigma_2(T^\prime)$.
 \end{proof}

  \begin{lemma} \label{l5} $\sigma_2(T, C(L)) \subseteqq \sigma_2(T)$ and
$\sigma_2(T^\prime, C^\prime(L)) \subseteqq \sigma_2(T^\prime)$.
 \end{lemma}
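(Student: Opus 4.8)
The plan is to prove the two inclusions by rather different means: the statement for $T^\prime$ is almost formal, while the one for $T$ requires building an explicit singular sequence in $C(K)$ out of one in $C(L)$.

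First I would dispose of $\sigma_2(T^\prime, C^\prime(L)) \subseteqq \sigma_2(T^\prime)$. Identify $C^\prime(L)$ isometrically with the norm-closed subspace of $C^\prime(K)$ consisting of the regular Borel measures on $K$ carried by $L$. Since $supp(T^\prime \mu) \subseteqq \varphi(supp\, \mu) \subseteqq \varphi(L) = L$, this subspace is $T^\prime$-invariant, and a direct check (it is the annihilator of $\{f \in C(K) : f|_L = 0\}$, which is $T$-invariant) shows that the restriction of $T^\prime$ to it coincides with the Banach dual of $T$ acting on $C(L)$. Hence, if $\mu_n \in C^\prime(L)$ is a singular sequence with $\|\mu_n\| = 1$ and $\lambda \mu_n - T^\prime \mu_n \to 0$ in $C^\prime(L)$, then $(\mu_n)$ is still singular in $C^\prime(K)$ (the subspace being closed) and still satisfies $\lambda \mu_n - T^\prime \mu_n \to 0$ there, so the singular-sequence characterization of $\sigma_2$ recalled in Section~2 gives $\lambda \in \sigma_2(T^\prime)$.

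For $\sigma_2(T, C(L)) \subseteqq \sigma_2(T)$ — where $C(L)$ is only a quotient of $C(K)$, so nothing is automatic — fix $\lambda \in \sigma_2(T, C(L))$ and a singular sequence $f_n \in C(L)$ with $\|f_n\|_{C(L)} = 1$ and $\lambda f_n - T f_n \to 0$ in $C(L)$. I would split according to the size of $\|f_n\|_{C(N)}$. If $\inf_n \|f_n\|_{C(N)} > 0$, then, since $\varphi(N) = N$, the restrictions $f_n|_N$ satisfy $\lambda(f_n|_N) - T(f_n|_N) \to 0$ in $C(N)$ and stay bounded below in norm; normalizing them shows $\lambda \in \sigma_{ap}(T, C(N))$, and Lemma~\ref{l2}(1) yields $\lambda \in \lambda\mathds{T} \subseteqq \sigma_2(T)$. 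In the complementary case we may pass to a subsequence with $\|f_n\|_{C(N)} \to 0$, so the sup norm of $f_n$ is asymptotically attained inside $Int_K L$, and I would cut $f_n$ off near $N$ and extend by zero: using continuity of $f_n$, choose open sets $N \subseteqq W_n^\prime \subseteqq \overline{W_n^\prime} \subseteqq W_n$ in $L$ with $\sup_{W_n}|f_n| \to 0$ and Urysohn functions $\psi_n \colon L \to [0,1]$ vanishing on $\overline{W_n^\prime}$ and equal to $1$ off $W_n$, so that $\|\psi_n f_n - f_n\|_{C(L)} \to 0$. Since $\partial_K L = L \setminus Int_K L = N$ and $\psi_n f_n$ vanishes on $N$, the zero-extension $\hat f_n \in C(K)$ of $\psi_n f_n$ is continuous, with $supp\,\hat f_n \subseteqq L \setminus W_n^\prime \subseteqq Int_K L$; consequently $\|\hat f_n\|_{C(K)} \to 1$ and $(\hat f_n)$ is singular in $C(K)$ because its restrictions to $L$ are asymptotically the singular sequence $f_n$. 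The key point is that $\lambda \hat f_n - T\hat f_n$ is supported in $L$: for $k \notin L$ one has $\hat f_n(k) = 0$, and also $\varphi(k) \notin L$ because $\varphi^{-1}(L) = L$, so $\hat f_n(\varphi(k)) = 0$ too; hence $\|\lambda \hat f_n - T\hat f_n\|_{C(K)} = \|\lambda(\psi_n f_n) - T(\psi_n f_n)\|_{C(L)} \leq \|\lambda f_n - T f_n\|_{C(L)} + \|\lambda I - T\|\,\|\psi_n f_n - f_n\|_{C(L)} \to 0$, and $\hat f_n/\|\hat f_n\|_{C(K)}$ is the required singular sequence witnessing $\lambda \in \sigma_2(T)$.

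I expect the only genuine obstacle to be the topological bookkeeping in this last case: verifying that $\partial_K L = N$ (which makes the zero-extension $\hat f_n$ legitimate) and that $supp\,\hat f_n$ remains inside $Int_K L$ (which, via $\varphi^{-1}(L) = L$, prevents $T$ from transporting the mass of $\hat f_n$ out of $L$). Once these are in place the estimates are routine, and the part of $\sigma_2(T, C(L))$ that ``lives on'' $N$ is absorbed by Lemma~\ref{l2}.
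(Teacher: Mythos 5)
Your proposal is correct and follows essentially the same route as the paper: the dual inclusion is handled by viewing $C^\prime(L)$ as a closed $T^\prime$-invariant subspace of $C^\prime(K)$ (the paper dismisses this as trivial), and for $\sigma_2(T,C(L))$ you split on whether $\|f_n\|_{C(N)}\to 0$, invoking Lemma~\ref{l2}(1) in the first case and, in the second, perturbing $f_n$ to vanish on $N$ and extending by zero to $K$ — exactly the paper's two cases, with your Urysohn-function construction and the verification via $\varphi^{-1}(L)=L$ merely making explicit what the paper leaves implicit.
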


 \begin{proof} Let $\lambda \in \sigma_2(T, C(L))$. Then there is a singular sequence $f_n \in C(L)$ such that $\|f_n\|=1$ and $Tf_n - \lambda f_n \rightarrow 0$. We have to consider two possibilities.

 \noindent (1) $\|f_n\|_{C(N)} \not \rightarrow 0$. Then $\lambda \in \sigma_2(T)$ by Lemma~\ref{l2} (1).

 \noindent (2) $\|f_n\|_{C(N)} \rightarrow 0$. Then we can find $g_n \in C(L)$ such that $f_n - g_n \rightarrow 0$ and $g_n \equiv 0$ on $N$. Clearly, the sequence $g_n$ is singular in $C(L)$. We define the function $h_n \in C(K)$ as follows
 \begin{equation*}
h_n(k) = \begin{cases} g_n(k) &\mbox{if } k \in L \\
0 & \mbox{if } k \in K \setminus L \end{cases} .
\end{equation*}
  The sequence $h_n$ is singular in $C(K)$ and $Th_n - \lambda h_n \rightarrow 0$. Therefore $\lambda \in \sigma_2(T)$.

 The second inclusion is trivial.
 \end{proof}

 \begin{lemma} \label{l7} Let $|\lambda| > \rho(T,C(N))$ and $\lambda \not \in \sigma_2(T,C(L))$. Then $\lambda \not \in \sigma_2(T)$.
 \end{lemma}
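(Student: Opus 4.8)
The plan is to regard $\lambda I-T$ on $C(K)$ as an extension of its action on $C(L)$ by its action on the ideal of functions vanishing on $L$, and to exploit that on that ideal $\lambda I-T$ is invertible. Consider the short exact sequence of Banach spaces
\begin{equation*}
0\longrightarrow J\longrightarrow C(K)\stackrel{R}{\longrightarrow}C(L)\longrightarrow 0,\qquad J:=C_0(K\setminus L)=\{f\in C(K):f|_L=0\},
\end{equation*}
with $R$ the restriction map (surjective since $L$ is closed, with $\ker R=J$). Because $\varphi$ is injective and $\varphi(L)=L$, one has $\varphi^{-1}(L)\cap K=L$, so $\varphi(K\setminus L)\subseteq K\setminus L$; hence $T$ leaves $J$ invariant and $RT=(T|_{C(L)})R$. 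Thus the operator induced by $\lambda I-T$ on the quotient $C(L)$ is exactly $\lambda I-T|_{C(L)}$, which by hypothesis lies in $\Phi_+$.

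The second point is that $\lambda I-T$ is \emph{invertible} on $J$. Restriction to $M$ embeds $J$ isometrically and $T$-equivariantly into $C(M)$, so $\rho(T,J)\le\rho(T,C(M))$; and $\rho(T,C(M))=\rho(T,C(N))$. The inequality $\rho(T,C(M))\ge\rho(T,C(N))$ is immediate ($C(N)$ is a $T$-equivariant quotient of $C(M)$); the reverse holds because $\bigcap_n\varphi^n(M)=N$, so every neighbourhood of $N$ is absorbing for $\varphi$ on $M$, whence a cocycle estimate over orbit segments, using the continuity of $w$, gives $\|w_n\|_{C(M)}\le c_\varepsilon\,(\rho(T,C(N))+\varepsilon)^n$ for every $\varepsilon>0$ (this is standard; compare the analysis of Theorem~\ref{t1}(I) and~\cite{Ki1}). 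Hence $\rho(T,J)\le\rho(T,C(N))<|\lambda|$, so $\lambda I-T$ is invertible on $J$.

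It then remains to invoke the routine fact: if $A\in L(X)$, $J$ is a closed $A$-invariant subspace, $A|_J$ is invertible, and the operator $\bar A$ induced by $A$ on $X/J$ lies in $\Phi_+$, then $A\in\Phi_+(X)$. For the kernel, the quotient map $q$ is injective on $\ker A$ since $\ker q\cap\ker A=\ker(A|_J)=\{0\}$, and $q(\ker A)\subseteq\ker\bar A$ is finite dimensional. For the range, if $Af_n\to y$ in $X$ then $\bar A(qf_n)=q(Af_n)\to qy$, and since $\bar A$ has closed range, $qy=\bar A\bar g$ for some $\bar g\in X/J$; choosing $g\in X$ with $qg=\bar g$ we get $y-Ag\in J$, so $y=A\bigl(g+(A|_J)^{-1}(y-Ag)\bigr)\in A(X)$. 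Applying this with $X=C(K)$, $A=\lambda I-T$, and the subspace $J$ above yields $\lambda I-T\in\Phi_+(C(K))$, that is, $\lambda\notin\sigma_2(T)$.

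The only ingredient here that is not pure bookkeeping is the identity $\rho(T,C(M))=\rho(T,C(N))$ securing invertibility of $\lambda I-T$ on $J$; everything else is formal manipulation of the exact sequence. One can instead run the argument through the singular-sequence characterisation of $\sigma_2$: iterating $\lambda f_n(k)\approx w(k)f_n(\varphi(k))$ and using $|\lambda|>\rho(T,C(M))$ forces any normalised almost-eigen sequence on $C(K)$ to have its $C(L)$-norm tending to $1$, which would place $\lambda$ in $\sigma_2(T,C(L))$ once convergence is transported back from $C(L)$ to $C(K)$ via invertibility on $J$ — but that transport is precisely what the exact sequence packages cleanly.
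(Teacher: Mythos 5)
Your proof is correct, and it packages the argument differently from the paper. The paper argues by contradiction with the singular-sequence characterisation of $\sigma_2$: if $f_n$ is a normalised singular sequence with $Tf_n-\lambda f_n\to 0$, then since $|\lambda|>\rho(T,C(M))=\rho(T,C(N))$ (the equality quoted from \cite[Theorem 3.23]{Ki1}) the restrictions $f_n|_M$ tend to $0$ in $C(M)$, so the restrictions to $L$ form a normalised singular almost-eigenvector sequence in $C(L)$, contradicting $\lambda\notin\sigma_2(T,C(L))$. You organise the same two ingredients --- invertibility of $\lambda I-T$ ``off $L$'' and upper semi-Fredholmness on $C(L)$ --- into the exact sequence $0\to J\to C(K)\to C(L)\to 0$ and a three-space lemma for $\Phi_+$, which you prove correctly. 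The pivot is identical in both versions, namely $\rho(T,C(M))=\rho(T,C(N))$; your sketch of that identity is the weakest part of your write-up and is unnecessary, since it is exactly \cite[Theorem 3.23]{Ki1}, which the paper simply cites. What your packaging buys: a direct (non-contradiction) proof, the bonus estimate $\dim\ker(\lambda I-T)\le\dim\ker(\lambda I-T,C(L))$ via injectivity of the quotient map on the kernel (which is consistent with the kernel formula of Theorem~\ref{t2}, where $M_2=\emptyset$ under the present hypotheses), and no need to check that $f_n|_L$ remains normalised and singular. One small slip in the justification: invariance of $J$ under $T$ and the intertwining $RT=(T|_{C(L)})R$ follow from $\varphi(L)\subseteq L$, not from $\varphi(K\setminus L)\subseteq K\setminus L$; both facts are true here, but the former is the one the computation actually uses.
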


 \begin{proof} Assume to the contrary that there is a singular sequence $f_n \in C(K)$ such that $\|f_n\| = 1$ and $Tf_n - \lambda f_n \rightarrow 0$. Because $|\lambda| > \rho(T, C(N))$ and $\rho(T, C(M)) = \rho(T, C(N))$ ( see e.g.~\cite[Theorem 3.23]{Ki1}), we have $\|f_n\|_{C(M)} \rightarrow 0$. Therefore, if $g_n$ is the restriction of $f_n$ on $L$ then the sequence $g_n$ is singular in $C(L)$, $\|g_n\| \rightarrow 1$, and $Tg_n - \lambda g_n \rightarrow 0$. Thus, $\lambda \in \sigma_2(T,C(L))$, a contradiction.
 \end{proof}

 \begin{lemma} \label{l8} Let $T$ be invertible on $C(N)$ and $|\lambda| < 1/\rho(T^{-1},C(N))$. Assume also that $\lambda \not \in \sigma_2(T, C(L))$. Then the following statements are equivalent.

 \noindent (1) $\lambda \in \sigma_2(T)$.

 \noindent (2) $card(K \setminus \varphi(K)) = \infty$.
 \end{lemma}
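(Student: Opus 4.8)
The plan is to work with the closed $T$-invariant ideal $J=\{f\in C(K):f|_L=0\}$, which is isometrically $C_0(K\setminus L)$ and gives a $T$-equivariant exact sequence $0\to J\to C(K)\to C(L)\to 0$ with the quotient operator equal to $T|_{C(L)}$. On $K\setminus L$ the map $\varphi$ acts as a stratified one-sided shift: $K\setminus L=\bigsqcup_{i\ge 0}S_i$ with $S_i=\varphi^i(K)\setminus\varphi^{i+1}(K)=\varphi^i(S_0)$, $S_0=K\setminus\varphi(K)$, and $\varphi:S_i\to S_{i+1}$ a homeomorphism; moreover the forward orbits of distinct points of $S_0$ are pairwise disjoint and infinite (a point of $S_0$ cannot be $\varphi$-periodic). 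Two dynamical facts will be proved first. (i) For every $p\in K\setminus L$ the $\omega$-limit set $\bigcap_n\overline{\{\varphi^i(p):i\ge n\}}$ is contained in $N$; hence the orbit of $p$ eventually enters every neighbourhood of $N$, and since $w$ does not vanish on $N$ (because $T$ is invertible on $C(N)$), $w$ vanishes at only finitely many points of each orbit. (ii) $1/\rho(T^{-1},C(N))=\lim_n(\min_{k\in N}|w_n(k)|)^{1/n}=:r_-$, so the assumption $|\lambda|<1/\rho(T^{-1},C(N))$ means $|\lambda|<r_-$; choosing $|\lambda|<\rho_0<r_-$ and $n_0$ with $|w_{n_0}|>\rho_0^{\,n_0}$ on a neighbourhood $U\supseteq N$ with $\overline U\subseteq\{w\neq 0\}$, one gets $|\lambda|^i/|w_i(p^{\flat})|\to 0$ for the nonvanishing tail of every orbit, where $p^{\flat}$ is the first orbit point past the last zero of $w$ on it (with $p^{\flat}=p$ if there is none). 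This is the only place the hypothesis $|\lambda|<1/\rho(T^{-1},C(N))$ is used.

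For (2)$\Rightarrow$(1) I would pass to $C^{\prime\prime}(K)$ with the characteristic functions $u_k$ of singletons $\{k\}$, as in the proof of Lemma~\ref{l2}; there $T^{\prime\prime}u_k=\sum_{\varphi(m)=k}w(m)u_m$, which is $0$ when $k\notin\varphi(K)$, so in particular $T^{\prime\prime}u_{p^{\flat}}=0$. Taking distinct $p_n\in S_0$ and $F_n=\sum_{i=0}^{N_n}\frac{\lambda^i}{w_i(p_n^{\flat})}\,u_{\varphi^i(p_n^{\flat})}$, a telescoping computation gives $(T^{\prime\prime}-\lambda)F_n=-\frac{\lambda^{\,N_n+1}}{w_{N_n}(p_n^{\flat})}\,u_{\varphi^{N_n}(p_n^{\flat})}$ while $\|F_n\|=\max_{i\le N_n}|\lambda|^i/|w_i(p_n^{\flat})|\ge 1$. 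By fact (ii) one can pick $N_n$ with $\|(T^{\prime\prime}-\lambda)F_n\|/\|F_n\|\le 1/n$; the $F_n$ have pairwise disjoint supports, hence $(F_n/\|F_n\|)$ is singular, and $\lambda\in\sigma_2(T^{\prime\prime})=\sigma_2(T)$. (For $\lambda=0$ one simply uses $F_n=u_{p_n}$.)

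For (1)$\Rightarrow$(2) I would prove the contrapositive. If $S_0=K\setminus\varphi(K)$ is finite, say $|S_0|=m$, then $S_0$ consists of $m$ isolated points, each $\varphi^i(K)$ is clopen, and $K\setminus L$ is a countable discrete space equal to the disjoint union of the $m$ forward orbits; thus $J\cong\bigoplus_{s=1}^{m}c_0(\mathds{N})$ and $T|_J=\bigoplus_{s=1}^{m}T_s$, where $T_s$ is the weighted backward shift $(T_sf)(i)=w(\varphi^i(p_s))f(i+1)$. By fact (i), $w$ has only finitely many zeros on $K\setminus L$; resetting those (isolated-point) values to $1$ gives a finite-rank perturbation $\tilde T=\tilde w T_\varphi$ of $T$ with $\tilde T|_{C(L)}=T|_{C(L)}$ and $\tilde T|_J=\bigoplus_s\tilde T_s$, each $\tilde T_s$ now a weighted backward shift whose weights are nonvanishing, eventually bounded below (fact (i): the orbit tends to $N$, where $w\neq 0$), and whose partial products $\tilde P_i^{(s)}$ satisfy $|\lambda|^i/|\tilde P_i^{(s)}|\to 0$ (fact (ii)). A direct estimate of the recursion $\tilde\omega_j f(j+1)=\lambda f(j)+e_j$ then shows that $\lambda I-\tilde T_s$ is bounded below on $\{f:f(0)=0\}$, a complement of its at most one-dimensional kernel; hence $\lambda I-\tilde T_s\in\Phi_+(c_0)$ and $\lambda I-\tilde T|_J\in\Phi_+(J)$. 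Combining this with the hypothesis $\lambda\notin\sigma_2(T,C(L))=\sigma_2(\tilde T,C(L))$ through the exact sequence $0\to J\to C(K)\to C(L)\to 0$ — using the singular-sequence characterization of $\sigma_2$ recalled in the Preliminaries, which makes $\sigma_2(\tilde T)\subseteq\sigma_2(\tilde T|_J)\cup\sigma_2(\tilde T,C(L))$ immediate — yields $\lambda I-\tilde T\in\Phi_+(C(K))$, and therefore $\lambda I-T\in\Phi_+(C(K))$, i.e. $\lambda\notin\sigma_2(T)$.

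The main obstacle I expect is the weighted-shift estimate in the last paragraph: proving that $\lambda I-\tilde T_s$ has closed range requires separating the behaviour of each orbit into its finite initial segment (before it settles inside $U$) and its geometric tail (controlled by $|\lambda|<\rho_0<r_-$), and doing so with uniform enough constants to survive the finite direct sum and the quotient argument. A secondary technical point is establishing the dynamical facts in (i) purely from compactness of $K$, i.e.\ without assuming $K$ metrizable, in particular that $\omega$-limit sets land in $N$ and that $w$ vanishes only finitely often along each orbit.
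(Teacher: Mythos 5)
Your argument is correct in substance, but it goes by a genuinely different route than the paper's. The paper's proof is much shorter: it first invokes Lemma~\ref{l1} to get that $\lambda I-T$ is \emph{surjective}, so closed range and finite defect come for free and membership in $\Phi_+$ reduces to $\dim\ker(\lambda I-T)<\infty$; it then observes that each point of $K\setminus\varphi(K)$ contributes (at most, resp.\ exactly) one dimension to the kernel, so the kernel is finite-dimensional iff $K\setminus\varphi(K)$ is finite. You never use surjectivity. Instead you split $C(K)$ along the $T$-invariant ideal $J=C_0(K\setminus L)$, show that when $K\setminus\varphi(K)$ is finite the set $K\setminus L$ is a finite union of discrete orbits so that $T|_J$ is, up to a finite-rank correction, a finite direct sum of weighted backward shifts on $c_0$, verify $\Phi_+$ for those by the explicit recursion estimate, and glue with the quotient $C(L)$ via the three-space property of $\Phi_+$; in the other direction you build disjointly supported approximate eigenvectors in $C''(K)$. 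Your version is longer but buys two real things: it does not rest on Lemma~\ref{l1}, whose hypotheses are phrased in terms of $C(L)$ rather than $C(N)$ and so do not literally match the hypotheses of this lemma unless $\mathrm{Int}_K L=\emptyset$; and in the direction ``$\mathrm{card}(K\setminus\varphi(K))=\infty\Rightarrow\lambda\in\sigma_2(T)$'' your bidual construction sidesteps the question of whether the formal eigenfunctions $f(\varphi^i(p))=\lambda^i/w_i(p)$ (extended by zero) are actually continuous on $K$ when the orbit points are not isolated, which is precisely the case here and which the paper's ``clearly $\dim\ker(\lambda I-T)=\infty$'' glosses over. The two technical points you flag as obstacles are genuinely fillable: the shift estimate works because, after the orbit settles in a neighbourhood $U$ of $N$ with $|w_{n_0}|>\rho_0^{n_0}$ on $U$, the sum $\sum_{i<j}|\lambda|^{j-i-1}|\tilde P_i|/|\tilde P_j|$ is dominated by a geometric series in $|\lambda|/\rho_0$ plus a finite initial block; and the non-metrizable version of fact (i) follows from compactness alone, since $\bigcap_n\mathrm{cl}\{\varphi^i(p):i\ge n\}\subseteq L\cap\mathrm{cl}(K\setminus L)\subseteq N$ forces the orbit to stay eventually in any neighbourhood of $N$.
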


 \begin{proof} By Theorem~\ref{t1} we have $\lambda \in \sigma(T)$ and by Lemma~\ref{l1} $(\lambda I - T)C(K) = C(K)$. Therefore $\lambda \not \in \sigma_2(T)$ if and only if  $dim \; ker(\lambda I - T) = dim \; ker(\lambda I - T^{\prime \prime}) < \infty$.

       Assume that $card(K \setminus \varphi(K) < \infty$. This condition combined with $\lambda \not \in \sigma(T, C(N))$ provides that $dim \; \ker{((\lambda I - T), C(M))} < \infty$. Combining it with the condition $\lambda \not \in \sigma_2(T, C(L))$ we see that $dim \; ker(\lambda I - T) < \infty$.
       Thus, $(1) \Rightarrow (2)$.

       Assume next that $card(K \setminus \varphi(K)) = \infty$. Then clearly $dim \; ker(\lambda I - T) = \infty$ and therefore $\lambda \in \sigma_2(T)$.
  \end{proof}

 \begin{lemma} \label{l9} The set $\sigma_2(T, C(M))$ is rotation invariant and for a $\lambda \in \sigma(T,C(M))$, $\lambda \neq 0$, the following conditions are equivalent.

 \noindent (1) $\lambda \mathds{T} \cap \sigma_2(T,C(M)) = \emptyset $.

 \noindent (2) $M$ is the union of two clopen (in $M$) subsets $M_1$ and $M_2$ such that
 \begin{enumerate}[(a)]
   \item $M_2 \neq \emptyset$,
   \item $\varphi(M_i) \subseteqq M_i, i=1,2$,
   \item  If $M_1 \neq \emptyset$ then $\rho(T,C(M_1)) < |\lambda|$,
   \item $T$ is invertible on $C(N_2)$ and $|\lambda| < 1/\rho(T^{-1},C(N_2))$ where $N_2 = \bigcap \limits_{n=0}^\infty \varphi^n(M_2)$,
   \item $card(M_2 \setminus \varphi(M_2)) < \infty$.
 \end{enumerate}

 \end{lemma}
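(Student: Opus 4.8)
\emph{Rotation invariance of $\sigma_2(T,C(M))$.} Every quantity below is homogeneous, so we keep $\lambda$ general and set $r=|\lambda|$. Let $\lambda_0\in\sigma_2(T,C(M))$ with $\lambda_0\ne 0$. Then $\lambda_0\in\sigma_{ap}(T,C(M))$, so by~\cite{Ki1} (cf.\ the use of Lemma~3.6 there in Step~1 of the proof of Theorem~\ref{t1}) there is a point $k\in M$ with $|w_n(k)|\ge|\lambda_0|^{n}$ for all $n$, and with $|w_n(\varphi^{-n}(k))|\le|\lambda_0|^{n}$ as long as the backward iterates of $k$ stay in $K$. Because $K\setminus L$ contains no $\varphi$-periodic points and $N\subseteq\overline{K\setminus L}$, one may now run the construction from the proof of Lemma~\ref{l2}(III) unchanged: given $\beta\in\lambda_0\mathds{T}$, choose wandering points $k_n\in K\setminus L$ (near $k$, or near its backward iterates) with pairwise disjoint finite orbit segments along which the analogues of~(\ref{eq5}) hold, let $u_n$ be the point mass at $\varphi^n(k_n)$, and form $F_n=\sum_{i=0}^{2n}(1-n^{-1/2})^{|i-n|}\beta^{-i}(T^{\prime\prime})^{i}u_n$; the same elementary estimate gives $\|T^{\prime\prime}F_n-\beta F_n\|=o(\|F_n\|)$ with $(F_n)$ singular, so $\beta\in\sigma_2(T^{\prime\prime},C^{\prime\prime}(M))=\sigma_2(T,C(M))$. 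Hence $\sigma_2(T,C(M))$ is a union of circles centred at the origin (together, possibly, with $\{0\}$), and (1) is exactly the statement that $\sigma_2(T,C(M))$ has an annular gap around the circle $\lambda\mathds{T}$.

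\emph{$(2)\Rightarrow(1)$.} As $M_1,M_2$ are clopen in $M$ and $\varphi$-invariant, $C(M)=C(M_1)\oplus C(M_2)$ with $T$ block-diagonal, so $\sigma_2(T,C(M))=\sigma_2(T,C(M_1))\cup\sigma_2(T,C(M_2))$. By (c), $\rho(T,C(M_1))<r$, so $\lambda\mathds{T}\cap\sigma(T,C(M_1))=\emptyset$, a fortiori $\lambda\mathds{T}\cap\sigma_2(T,C(M_1))=\emptyset$. For $M_2$: by (d) and Lemma~\ref{l1}, applied with $(M_2,N_2)$ in place of $(K,L)$, the operator $\lambda I-T$ maps $C(M_2)$ onto $C(M_2)$; by (d), (e) and the argument of Lemma~\ref{l8}, its kernel in $C(M_2)$ is finite-dimensional (there is no contribution from $N_2$ since $\lambda\notin\sigma(T,C(N_2))$, and each of the finitely many orbits meeting $M_2\setminus\varphi(M_2)$ carries at most a one-dimensional space of solutions). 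Hence $\lambda\notin\sigma_2(T,C(M_2))$, and by rotation invariance applied to $M_2$, $\lambda\mathds{T}\cap\sigma_2(T,C(M_2))=\emptyset$. Combining the two pieces gives (1).

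\emph{$(1)\Rightarrow(2)$.} This is the principal direction. The construction of Lemma~\ref{l2}(1) uses only points of $K\setminus L\subseteq M$, so it gives $\mu\mathds{T}\subseteq\sigma_2(T,C(M))$ for every $\mu\in\sigma_{ap}(T,C(N))$; since $\sigma_2(T,C(N))\subseteq\sigma_{ap}(T,C(N))$, the annular gap forces $\lambda\mathds{T}\cap\sigma_2(T,C(N))=\emptyset$. As $\varphi$ is a homeomorphism of $N$ onto itself, the structure of the (essential) spectrum of $T$ on $C(N)$ is known by~\cite{Ki3}; since $\lambda$ lies in a gap of $\sigma_2(T,C(N))$, a Shilov-idempotent/spectral-decomposition argument over the commutative algebra generated by $T$ and $C(N)$ (as in~\cite{Ki3}) yields a clopen $\varphi$-invariant partition $N=N_1\sqcup N_2$ with $\rho(T,C(N_1))<r$, with $T$ invertible on $C(N_2)$, and with $r<1/\rho(T^{-1},C(N_2))$; these give (c) and (d). To lift this partition to $M$, put
\[
M_1=\{\,k\in M:\ \overline{\{\varphi^m(k):m\ge 0\}}\cap N\subseteq N_1\,\},\qquad M_2=M\setminus M_1 .
\]
Then $M_1\cap N=N_1$, $M_2\cap N=N_2\ne\emptyset$ (so (a)), $M_1$ and $M_2$ are unions of full $\varphi$-orbits in $M$ (so (b)), $N_2=\bigcap_n\varphi^n(M_2)$, and, by~\cite[Theorem~3.23]{Ki1} together with $\rho(T,C(N_1))<r$, also $\rho(T,C(M_1))<r$. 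The one delicate point is that $M_1$ is clopen, equivalently that no orbit in $K\setminus L$ has $\omega$-limit points both in $N_1$ and in $N_2$. If some orbit did, then along it (or along a sequence of wandering orbits accumulating on this behaviour) the products $|w_n|$ would pass infinitely often from growth rate $<r$ (near $N_1$) to growth rate $>r$ (near $N_2$), so $|w_n/\lambda^n|$ would take values close to $1$ at infinitely many pairwise disjoint locations, and the corresponding telescoped combinations would exhibit a singular sequence witnessing $\lambda\in\sigma_2(T,C(M))$, contradicting (1). Finally, if $\mathrm{card}(M_2\setminus\varphi(M_2))=\infty$, then — since $r<1/\rho(T^{-1},C(N_2))$ makes $|w_n|$ outgrow $r^{n}$ along every orbit entering $M_2$ through $M_2\setminus\varphi(M_2)$ — each such orbit carries a nonzero continuous solution of $(\lambda I-T)f=0$ in $C(M_2)$ that vanishes into $N_2$; these solutions are linearly independent, so $\dim\ker(\lambda I-T,C(M_2))=\infty$ and $\lambda\in\sigma_2(T,C(M_2))\subseteq\sigma_2(T,C(M))$, again contradicting (1). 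Hence (e) holds, and (2) follows.

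\emph{Main obstacle.} The heart of the argument is the clopenness of $M_1$: turning the hypothesis $\lambda\notin\sigma_2(T,C(M))$ into the dynamical assertion that no wandering orbit straddles the two spectral pieces of $N$. This amounts to converting the global spectral gap into quantitative two-sided bounds on the cocycle $\{w_n\}$ along orbits, and then choosing pairwise disjoint supports so that the telescoped approximate eigenvectors really do form a singular sequence; the presence of $\varphi$-periodic points in $N$ and the failure of upper semicontinuity of $\omega$-limit sets are precisely what make this delicate. By contrast, the spectral decomposition of $N$ and the eigenfunction count behind (e) are routine adaptations of~\cite{Ki3} and of Lemmas~\ref{l1} and~\ref{l8}.
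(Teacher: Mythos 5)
Your implication $(2)\Rightarrow(1)$ and the reduction of $(1)\Rightarrow(2)$ to a statement about $N$ follow the paper's route, but there is a genuine gap at the pivotal step of $(1)\Rightarrow(2)$. From hypothesis (1) and Lemma~\ref{l2} you correctly obtain $\lambda\mathds{T}\cap\sigma_{ap}(T,C(N))=\emptyset$ (hence a gap in $\sigma_2(T,C(N))$), but you then invoke a ``Shilov-idempotent/spectral-decomposition argument'' to produce a clopen $\varphi$-invariant splitting $N=N_1\sqcup N_2$ with $\rho(T,C(N_1))<|\lambda|<1/\rho(T^{-1},C(N_2))$. That decomposition is available from~\cite{Ki1},~\cite{Ki3} only when $\lambda\mathds{T}$ misses the \emph{whole} spectrum $\sigma(T,C(N))$, not merely $\sigma_{ap}$ or $\sigma_2$. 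A gap in the approximate point spectrum leaves open the possibility $\lambda\mathds{T}\subseteq\sigma_r(T,C(N))$, in which case $N$ carries instead the three-piece attractor--repeller structure of~\cite[Theorem 3.29]{Ki1} (sets $N_1$, $N_2$, $O$ with $\varphi^m(N\setminus V_2)\subseteq V_1$ for large $m$), and no clopen two-piece splitting exists. Ruling this case out is the bulk of the paper's proof: in the subcase where infinitely many points of $M\setminus\varphi(M)$ have orbits entering every neighborhood of $N_2$, one builds a singular sequence of telescoped point masses witnessing $\lambda\in\sigma_2(T,C(M))$, contradicting (1); in the complementary subcase one derives a dynamical contradiction from the attractor--repeller property. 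None of this appears in your argument; as written, it silently assumes $\lambda\mathds{T}\cap\sigma(T,C(N))=\emptyset$.

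Two secondary points. First, your direct proof of rotation invariance relies on a two-sided bound $|w_n(k)|\ge|\lambda_0|^n$, $|w_n(\varphi^{-n}(k))|\le|\lambda_0|^n$ at a point $k\in M$ that need not have an infinite backward orbit, so the construction of Lemma~\ref{l2}(III) does not apply ``unchanged''; one must truncate the backward part, as in the paper's subcase where the witnessing points lie in $M\setminus\varphi(M)$. In the paper, rotation invariance is instead a byproduct of the closed chain $(1)\Leftrightarrow(2)$, since condition (2) depends only on $|\lambda|$. Second, the clopenness of $M_1$, which you single out as the main obstacle, is not where the paper locates the difficulty: once $\lambda\mathds{T}\cap\sigma(T,C(N))=\emptyset$ is established, the splitting of $N$ lifts to $M$ comparatively routinely (forward orbits of points of $M$ stay in $M$ and their $\omega$-limit sets lie in exactly one of $N_1$, $N_2$), and conditions (d), (e) then follow by applying Lemmas~\ref{l7} and~\ref{l8} to $M_2$ and $N_2$, as you also do.
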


 \begin{proof} The implication $(2) \Rightarrow (1)$ follows from Lemmas~\ref{l7} and~\ref{l8}.

 To prove that $(1) \Rightarrow (2)$ notice that if $\lambda \in \sigma(T, C(M)) \setminus \sigma_2(T,C(M))$ then by Lemma~\ref{l2} we have $\lambda \mathds{T} \cap \sigma_{ap}(T, C(N)) = \emptyset$. We have to consider two possibilities.

 (I) $\lambda \mathds{T} \cap \sigma(T,C(N)) = \emptyset$. Then (see~\cite{Ki1}) $N$ is the union of two clopen (in $N$) subsets $N_1$ and $N_2$ (one of them might be empty), such that
 \begin{equation*}
   \varphi(N_i) = N_i, i=1,2,
 \end{equation*}
 \begin{equation*}
   \rho(T, C(N_1)) < |\lambda|\ ,
 \end{equation*}
 \begin{equation*}
    T \; \text{is invertible on } C(N_2)\; \text{and} \;|\lambda| < 1/\rho(T^{-1},C(N_2)).
 \end{equation*}
 It follows from the definition of $N$ that $M$ is the union of two clopen (in $M$) subsets $M_1$ and $M_2$ such that $N_i = \bigcap \limits_{n=0}^\infty \varphi^n(M_i), i=1,2$. It remains to apply Lemmas~\ref{l7} and~\ref{l8}.

 (II) $\lambda \mathds{T} \subset \sigma_r(T,C(N))$. Then (see~\cite[Theorem 3.29]{Ki1}) $N$ is the union of three pairwise disjoint nonempty subsets $N_1$, $N_2$, and $O$ such that

 \noindent $(\alpha)$ $N_i$, $i=1,2$ are closed subsets of $N$,

 \noindent $(\beta)$ $\varphi(N_i) = N_i$, i=1,2,

\noindent $(\gamma)$ $\rho(T,C(N_1)) < |\lambda|$,

\noindent $(\delta)$ The operator $T$ is invertible on $C(N_2)$ and $|\lambda| < 1/\rho(T,C(N_2))$,

\noindent $(\varepsilon)$ If $V_1$ and $V_2$ are open neighborhoods in $N$ of $N_1$ and $N_2$, respectively, then there is an $n \in \mathds{N}$, such that for any $m \geq n$ we have $\varphi^m(N \setminus V_2) \subseteq V_1$.

We need to consider two subcases.

\noindent $(IIa)$ For any open (in $M$) neighborhood $V$ of $N_2$ there is an infinite subset $E$ of $M \setminus \varphi(M)$ such that
\begin{equation*}
  \forall k \in E \; \exists n = n(k) \in \mathds{N} \; \text{such that} \; \varphi^n(k) \in V.
\end{equation*}
It follows from $(\delta)$ that there are a positive number $\varepsilon$ and open (in $M$) neighborhoods $V_n, n \in \mathds{N}$ of $N_2$ such that
\begin{equation}\label{eq12}
  |w_n(t)| \geq (|\lambda| + \varepsilon)^n, t \in V_n.
\end{equation}
By our assumption there are pairwise distinct points $k_n, n \in \mathds{N}$ and positive integers $m_n$ such that $k_n \in M \setminus \varphi(M)$ and $u_n = \varphi^{m_n}(k_n) \in V_n$. We define $f_n \in C^{\prime \prime}(M)$ as follows.
\begin{equation*}
  f_n(u_n)=1,
\end{equation*}
\begin{equation*}
  f_n(\varphi^{-l}(u_n)) =\frac{w_l(\varphi^{-l}(u_n)}{\lambda^l}, l=1, \ldots , m_n,
\end{equation*}
\begin{equation*}
   f_n(\varphi^l(u_n)) = \frac{\lambda^l}{w_l(u_n)}, l = 1, \ldots , n,
\end{equation*}
\begin{equation*}
   f_n(k) =0 \; \text{otherwise}.
\end{equation*}
It follows from the definition of $f_n$ and~(\ref{eq12})  that $\|f_n\| \geq 1$ and
$T^{\prime \prime} f_n - \lambda f_n \rightarrow 0$. Because the sequence $f_n$ is singular we get $\lambda \in \sigma_2(T^{\prime \prime}, C^{\prime \prime}(M))= \sigma_2(T, C(M))$, a contradiction.

\noindent $(IIb)$ There is an open (in $M$) neighborhood $V$ of $N_2$ such that the set
\begin{equation*}
  F = \{k \in M \setminus \varphi(M) \; : \;\exists n \in \mathds{N} \; \text{such that} \; \varphi^n(k) \in V\}
\end{equation*}
is at most finite. It follows from the definition of $N$ that $F$ cannot be empty. Clearly $F$ consists of points isolated in $M$. We will bring the assumption that $F$ is finite to a contradiction.
It is not difficult to see from $(\varepsilon)$ that there is a $k \in F$ such that the intersection of $cl\{\varphi^n(k) : n \in \mathds{N}\}$ with each of the sets $N_1$, $N_2$, and $O$ is not empty. Therefore we can assume without loss of generality that $M = cl\{\varphi^n(k) : n \in \mathds{N}\}$.

Let $W$ be an open neighborhood of $N_1$ in $M$ such that $cl W \cap N_2 = \emptyset$. It follows from $(\varepsilon)$ that there is an $m \in \mathds{N}$ such that $\varphi^m(W) \subseteq W$. Considering, if necessary, the operator $T^m$ instead of $T$ we can assume that $m = 1$. There is a $p \in \mathds{N}$ such that $\varphi^p(k) \in W$. Then $\varphi^n(k) \in W$ for any $n \geq p$, a contradiction.
 \end{proof}

\begin{lemma} \label{l10}
\begin{equation*}
   \sigma_2(T^\prime , C^\prime(M)) \cup \{0\}
 = \sigma_2(T^\prime , C^\prime(N)) \mathds{T} \cup \{0\}.
\end{equation*}
\end{lemma}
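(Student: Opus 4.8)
The statement asserts an equality of two rotation-invariant (up to the added $\{0\}$) subsets of $\sigma(T^\prime,C^\prime(M))$, so the natural strategy is to establish two inclusions, and the work splits according to whether $\lambda=0$ or $\lambda\neq 0$. The inclusion $\supseteq$ is the easy half: if $\lambda\in\sigma_2(T^\prime,C^\prime(N))$, then $\lambda\mathds{T}\subseteq\sigma_2(T^\prime)$ by Lemma~\ref{l2}(2), and since the argument there actually produces singular sequences of measures supported on $K\setminus L\subseteq M$ (the Dirac measures $\nu_n=\delta_{\varphi^{-n}(k_n)}$ and their combinations $\mu_n$ in~(\ref{eq9}) live on $K\setminus L$), one gets $\lambda\mathds{T}\subseteq\sigma_2(T^\prime,C^\prime(M))$. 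Together with $0\in\sigma_2(T^\prime,C^\prime(M))$ (which holds because $\varphi(M)\subsetneqq M$ forces infinitely many isolated points whose Dirac measures form a singular sequence with $T^\prime\delta_{k_n}$ controlled), this gives the right-to-left containment.

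For the reverse inclusion I would take $\lambda\in\sigma_2(T^\prime,C^\prime(M))$ with $\lambda\neq 0$ and show $\lambda\mathds{T}\cap\sigma_2(T^\prime,C^\prime(N))\neq\emptyset$; by rotation invariance of $\sigma_2(T^\prime,C^\prime(N))\mathds{T}$ it then suffices to get one point of $\lambda\mathds{T}$ in $\sigma_2(T^\prime,C^\prime(N))$. Normalizing $|\lambda|=1$, suppose for contradiction that $\lambda\mathds{T}\cap\sigma_2(T^\prime,C^\prime(N))=\emptyset$. Then $\lambda\mathds{T}$ avoids the upper semi-Fredholm spectrum of $T^\prime$ on $C^\prime(N)$; since $T^\prime$ restricted to $C^\prime(N)$ is (the dual of) a weighted composition operator with $\varphi$ a \emph{homeomorphism of $N$ onto itself}, the structure theory from~\cite{Ki1} (the analogues of the trichotomy used in Lemma~\ref{l9}) gives a clopen decomposition $N=N_1\cup N_2$ with $\varphi(N_i)=N_i$, with $T$ contractive-type on $N_1$ ($\rho(T,C(N_1))<1$) and invertible with $\rho(T^{-1},C(N_2))<1$ on $N_2$, reflecting that on $C^\prime(N)$ the circle $\lambda\mathds{T}$ is split off the spectrum. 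I would then lift this to $M$: by definition of $N$ there is a clopen-in-$M$ decomposition $M=M_1\cup M_2$ with $N_i=\bigcap_n\varphi^n(M_i)$, and translating the Fredholm hypothesis on $C^\prime(M)$ through Lemmas~\ref{l7} and~\ref{l8} (applied to the pieces, dualized) one derives that $M_2\setminus\varphi(M_2)$ is finite and $M_1$ carries a strict spectral radius bound — but this is precisely the situation in which $\lambda\notin\sigma_2(T^\prime,C^\prime(M))$, contradicting the choice of $\lambda$.

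Handling $\lambda=0$ separately: $0$ lies in the left side iff $0\in\sigma_2(T^\prime,C^\prime(M))$, which as noted above is automatic from~(\ref{eq2}); and $0$ lies in the right side by the added $\{0\}$. So the $\lambda=0$ case is a triviality and all the content is in the $\lambda\neq 0$ analysis, where one transports the Fredholm/non-Fredholm dichotomy between $C^\prime(M)$ and $C^\prime(N)$ using the already-proved Lemmas~\ref{l2}, \ref{l7}, \ref{l8} together with the fact that $\rho(T,C(M))=\rho(T,C(N))$.

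**Main obstacle.** The delicate point is the reverse inclusion for $\lambda\neq 0$: one must show that freedom from the \emph{upper} semi-Fredholm spectrum on $C^\prime(M)$ forces the same freedom on $C^\prime(N)$. The difficulty is that points of $M\setminus L$ that iterate toward the ``invertible part'' $N_2$ can manufacture singular approximate-eigen-measures for $T^\prime$ (this is exactly the mechanism in subcase $(IIa)$ of Lemma~\ref{l9}, transposed to the dual side); ruling this out requires combining the cardinality constraint $card(M_2\setminus\varphi(M_2))<\infty$ with the attracting/repelling dynamics near $N_1,N_2$, much as in subcase $(IIb)$. I expect the cleanest route is to avoid re-deriving this by hand and instead invoke Lemma~\ref{l9} itself in dual form (noting $\sigma_2(T^\prime,C^\prime(M))$ and $\sigma_2(T,C(M))$ need not coincide, so some care is needed) or to mirror its proof verbatim with measures in place of bidual functions — the estimates are identical in form to~(\ref{eq7}),~(\ref{eq10}), only the roles of $w_n$ and $w_n\circ\varphi^{-n}$ are interchanged.
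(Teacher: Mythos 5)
Your right-to-left inclusion is fine and is exactly what the paper does (Lemma~\ref{l2}(2) plus the observation that the singular measures $\mu_n$ constructed there are supported in $K\setminus L\subseteq M$); the side remark that $0\in\sigma_2(T',C'(M))$ is both false in general (take $w$ bounded away from zero: then $\|T'\mu\|=\int|w|\,d|\mu|\geq c\|\mu\|$ and $T'$ is bounded below on $C'(M)$; also $\varphi(M)\subsetneqq M$ does not force isolated points, e.g.\ $\varphi(x)=x/2$ on $[0,1]$) and unnecessary, since $\{0\}$ is unioned onto both sides.

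The genuine gap is in the converse inclusion. Your plan is to assume $\lambda\mathds{T}\cap\sigma_2(T',C'(N))=\emptyset$ and extract from \cite{Ki1} the clopen splitting $N=N_1\cup N_2$ with $\rho(T,C(N_1))<|\lambda|$ and $T$ invertible on $C(N_2)$ with $|\lambda|<1/\rho(T^{-1},C(N_2))$. But that splitting is what you get when $\lambda\mathds{T}$ misses $\sigma(T,C(N))$ (or at least $\sigma_{ap}(T,C(N))$, as in case (I) of Lemma~\ref{l9}). Avoiding $\sigma_2(T',C'(N))$ — i.e.\ the lower semi-Fredholm spectrum of $T$ on $C(N)$ — is a much weaker hypothesis: the circle $\lambda\mathds{T}$ can still lie inside $\sigma_{ap}(T,C(N))$, in which case no such decomposition exists and the argument collapses at its first step. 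You flag that ``some care is needed'' in dualizing Lemma~\ref{l9}, but that care is precisely the missing content, and the rest of the sketch (dualizing Lemmas~\ref{l7} and~\ref{l8}, the cardinality constraint, the $(IIa)/(IIb)$ dichotomy) is never carried out. The paper avoids all of this with a short positivity argument: given a singular normalized sequence $\mu_n\in C'(M)$ with $T'\mu_n-\lambda\mu_n\to 0$, disjointness preservation gives $|T'|\,|\mu_n|-|\lambda|\,|\mu_n|\to 0$; a weak$^\star$ cluster point $\mu$ of the probability measures $|\mu_n|$ satisfies $|T'|\mu=|\lambda|\mu$ and, as in the proof of Lemma~\ref{l1}, is supported on $N$, so $|\lambda|\in\sigma_{ap}(|T'|,C'(N))$, and the rotation-invariance results of \cite{Ki3} for the surjective homeomorphism $\varphi|_N$ then place $\lambda$ in $\sigma_2(T',C'(N))\mathds{T}$. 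If you want a correct proof along your lines you would need either to strengthen your contradiction hypothesis to $\lambda\mathds{T}\cap\sigma_{ap}(T',C'(N))=\emptyset$ (and justify why that suffices), or to adopt the modulus-operator route.
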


\begin{proof} The inclusion $\sigma_2(T^\prime , C^\prime(N)) \mathds{T} \cup \{0\} \subseteq \sigma_2(T^\prime, C^\prime(M)) \cup \{0\}$ follows from Lemma~\ref{l2}.

 To prove the converse inclusion consider $\lambda \in \sigma_{ap}(T', C'(N)) \setminus \{0\}$. The proof of Lemma~\ref{l1} shows that $|\lambda| \in \sigma_{ap}(|T'|, C'(N))$. But then (see~\cite{Ki3}) $\lambda \in \sigma_{ap}(T', C'(N)) \mathds{T}$.
\end{proof}

\section{Description of essential spectra of $T = wT_\varphi$}

Finally we can provide a complete description of essential spectra of weighted composition operators on $C(K)$ induced by non-surjective homeomorphisms. The statements of Theorems~\ref{t2} and~\ref{t3} below follow from the previous lemmas.

\begin{theorem} \label{t2}  Let $K$ be a compact Hausdorff space, $\varphi$ be a homeomorphism of $K$ into itself, and $w \in C(K)$. Let $T$ be the operator on $C(K)$ defined by~(\ref{eq1}). Assume~(\ref{eq2}) and notations in~(\ref{eq3}). Let $\lambda \in \sigma(T) \setminus \{0\}$. The operator $\lambda I - T$ is upper semi-Fredholm if and only if the following conditions are satisfied

\begin{enumerate}[(a)]
  \item The operator $\lambda I - T$ is upper semi-Fredholm on $C(L)$.
  \item The set $M$ is the union of two $\varphi$-invariant disjoint closed subsets $M_1$ and $M_2$ such that
  \item if $M_1 \neq \emptyset$ then $\rho(T,C(M_1)) < |\lambda|$,
  \item if $M_2 \neq \emptyset$ \footnote{In particular, if $\lambda \not \in \sigma(T,C(L))$.} then $T$ is invertible on $C(N_2)$, where $N_2 = \bigcap \limits_{n=0}^\infty \varphi^n(M_2)$, $|\lambda| < 1/\rho(T^{-1},C(N_2))$, and
   the set $M_2 \setminus \varphi(M_2)$ is finite.
\end{enumerate}
Moreover,
\begin{equation*}
  \dim{\ker{(\lambda I - T)}} = \dim{\ker{(\lambda I - T, C(L))}} + card(M_2 \setminus \varphi(M_2)).
\end{equation*}
  \end{theorem}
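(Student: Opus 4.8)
The operator $\lambda I - T$ is upper semi-Fredholm precisely when $\lambda\notin\sigma_2(T)$, so for $\lambda\neq 0$ the statement to be proved is that $\lambda\notin\sigma_2(T)$ if and only if (a)--(d) hold. I would split the argument into two parts. First, the set identity
$$\sigma_2(T)\setminus\{0\}=\bigl(\sigma_2(T,C(L))\cup\sigma_2(T,C(M))\bigr)\setminus\{0\}.$$
Second, the equivalence, for $\lambda\neq 0$, of ``$\lambda\notin\sigma_2(T,C(M))$'' with (b)--(d). The second part is essentially Lemma~\ref{l9}: since $\sigma_2(T,C(M))$ is rotation invariant, $\lambda\notin\sigma_2(T,C(M))$ is the same as $\lambda\mathds{T}\cap\sigma_2(T,C(M))=\emptyset$, which for $\lambda\in\sigma(T,C(M))$ is exactly condition~(2) of Lemma~\ref{l9}, i.e. (b)--(d) with $M_2\neq\emptyset$; and for $\lambda\notin\sigma(T,C(M))$ Theorem~\ref{t1}(I) forces $|\lambda|>\rho(T,C(M))$, so (b)--(d) hold with $M_1=M$, $M_2=\emptyset$. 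The converse directions are read off the same dichotomy. Combined with the first part and Lemma~\ref{l5} (which gives $\sigma_2(T,C(L))\subseteq\sigma_2(T)$), this yields the asserted equivalence.

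For the set identity, ``$\supseteq$'' on the $C(L)$ side is Lemma~\ref{l5}, and on the $C(M)$ side one checks, by inspecting the proofs of Lemmas~\ref{l2} and~\ref{l9}, that a singular sequence in $C^{\prime\prime}(M)$ witnessing a nonzero $\lambda\in\sigma_2(T,C(M))$ --- built there from finite $\varphi$-orbit segments of points of $N$ or of $K\setminus\varphi(K)$, all of which lie in $M$ and miss $Int_K L$ --- is equally a singular sequence in $C^{\prime\prime}(K)$, so $\lambda\in\sigma_2(T^{\prime\prime})=\sigma_2(T)$. For ``$\subseteq$'' I would use the short exact sequence $0\to I_L\to C(K)\to C(L)\to 0$, where $I_L=\{f\in C(K):f|_L\equiv 0\}$ is $T$-invariant because $\varphi(L)=L$ and $C(K)/I_L$ is $C(L)$ with the induced operator $T$; crucially, $I_L$ is simultaneously a closed $T$-invariant subspace of $C(M)$, namely $I_L=C_0(K\setminus L)=C_0(M\setminus N)$. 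Hence, if $\lambda\notin\sigma_2(T,C(L))$ and $\lambda\notin\sigma_2(T,C(M))$, then $\lambda I-T$ is upper semi-Fredholm both on the quotient $C(L)$ and on the subspace $I_L$ (the latter as a restriction of the $\Phi_+$ operator $\lambda I-T$ on $C(M)$ to a closed $T$-invariant subspace), whence $\lambda I-T\in\Phi_+(C(K))$ by the standard stability of $\Phi_+$ under extensions by closed invariant subspaces. (Alternatively one may split off the $M_2$-part and invoke Lemmas~\ref{l1},~\ref{l7},~\ref{l8} directly, whose hypotheses are precisely what (b)--(d) provide on the $M$ side.)

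It remains to prove the formula for $\dim\ker(\lambda I-T)$. Fix $f\in\ker(\lambda I-T)$. Iterating the eigenvalue equation gives $f(k)=\lambda^{-n}w_n(k)f(\varphi^n(k))$ for $k\in M_1$ and all $n$; since $\limsup_n|w_n(k)|^{1/n}\le\rho(T,C(M_1))<|\lambda|$ and $f$ is bounded, $f\equiv 0$ on $M_1$, in particular on $N_1$. Next, $\ker(\lambda I-T,C(N_1))=\ker(\lambda I-T,C(N_2))=\{0\}$, the first because $\rho(T,C(N_1))\le\rho(T,C(M_1))<|\lambda|$, the second because $|\lambda|<1/\rho(T^{-1},C(N_2))$ gives $\lambda\notin\sigma(T,C(N_2))$; hence every element of $\ker(\lambda I-T,C(L))$ vanishes on $N=N_1\cup N_2$, and --- using also $f|_{M_1}=0$ --- so does every element of $\ker(\lambda I-T)$, while every element of $\ker(\lambda I-T,C(M_2))$ vanishes on $N_2$. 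Consequently $f\mapsto(f|_L,f|_{M_2})$ is a linear isomorphism of $\ker(\lambda I-T)$ onto $\ker(\lambda I-T,C(L))\oplus\ker(\lambda I-T,C(M_2))$: injective since $K=L\cup M_1\cup M_2$ and $f|_{M_1}=0$, surjective since for any $h$ in the first summand and $g$ in the second, $h$ and $g$ both vanish on $L\cap M_2=N_2$ and $h$ vanishes on $L\cap M_1=N_1$, so $h$, $g$ and $0$ paste along the closed cover $\{L,M_1,M_2\}$ of $K$ to a continuous eigenfunction. Finally, arguing as in the proof of Lemma~\ref{l8}, the transient set $M_2\setminus N_2$ is a disjoint union of $card(M_2\setminus\varphi(M_2))$ forward $\varphi$-orbits, along each of which $w\,(f\circ\varphi)=\lambda f$ propagates forward with exactly one degree of freedom --- the value just past the last zero of $w$ on the orbit, of which there are only finitely many since $w$ does not vanish on $N_2$ --- and the resulting values tend to $0$ along the orbit, so extend continuously by $0$ across $N_2$; hence $\dim\ker(\lambda I-T,C(M_2))=card(M_2\setminus\varphi(M_2))$, which gives the stated formula.

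The main obstacle I anticipate is the bookkeeping in the last paragraph: pinning down $\dim\ker(\lambda I-T,C(M_2))$ in the presence of zeros of $w$ on the transient set, and verifying carefully that the pasted eigenfunctions are genuinely continuous on all of $K$. A second delicate point is the reassembly of the two ``pieces'' $C(L)$ and $C(M)$ into $C(K)$ --- i.e. applying the stability of $\Phi_+$ along $0\to I_L\to C(K)\to C(L)\to 0$ with the right identification of $I_L$ as a closed $T$-invariant ideal of $C(M)$ --- since $M$ and $L$ overlap in $N$ and no direct-sum splitting of $C(K)$ is available. Everything else is an assembly of Lemmas~\ref{l1},~\ref{l2},~\ref{l5},~\ref{l7},~\ref{l8},~\ref{l9} and Theorem~\ref{t1}.
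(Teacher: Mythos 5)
Your proposal is correct and follows the route the paper itself intends: the paper offers no argument for Theorem~\ref{t2} beyond the remark that it ``follows from the previous lemmas,'' and your assembly of Lemmas~\ref{l1}, \ref{l2}, \ref{l5}, \ref{l7}, \ref{l8}, \ref{l9} together with Theorem~\ref{t1}(I) is exactly that assembly, with the gluing of $C(L)$ against the $T$-invariant ideal of functions vanishing on $L$ (a closed $T$-invariant subspace of $C(M)$) and the kernel-dimension bookkeeping supplying details the paper leaves implicit. The two points you flag as delicate do go through: the three-space stability of $\Phi_+$ along that exact sequence is standard, and the continuity of the pasted eigenfunctions follows because finiteness of $M_2 \setminus \varphi(M_2)$ forces every point of $M_2 \setminus N_2$ to be isolated in $M_2$ while $|\lambda| < 1/\rho(T^{-1}, C(N_2))$ forces the orbit values to decay to $0$ toward $N_2$.
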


\begin{theorem} \label{t3} Let $K$ be a compact Hausdorff space, $\varphi$ be a homeomorphism of $K$ into itself, and $w \in C(K)$. Let $T$ be the operator on $C(K)$ defined by~(\ref{eq1}). Assume~(\ref{eq2}) and notations in~(\ref{eq3}). Let $\lambda \in \sigma(T) \setminus \{0\}$. The operator $\lambda I - T$ is lower semi-Fredholm if and only if the following conditions are satisfied
\begin{enumerate} [(a)]
  \item  The operator $\lambda I - T$ is lower semi-Fredholm on $C(L)$.
\item $\lambda \mathds{T} \subseteq \sigma_r(T', C'(N))$.
\end{enumerate}
Moreover, $def(\lambda I - T) = def(\lambda I - T), C(L))$.
\end{theorem}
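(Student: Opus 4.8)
The plan is to dualize and re‑use the invariant‑ideal decomposition that underlies Theorems~\ref{t1} and~\ref{t2}. The operator $\lambda I-T$ is lower semi‑Fredholm on $C(K)$ exactly when $\lambda\notin\sigma_2(T')$, i.e. when $\lambda I-T'$ is upper semi‑Fredholm on $C(K)'$, so I would work with the $T$‑invariant closed ideal $J=\{f\in C(K):f|_L=0\}$, which is (isometrically) $C_0(K\setminus L)=C_0(M\setminus N)$, together with the restriction map $q:C(K)\to C(L)$, whose kernel is $J$ and which intertwines $T$ with the operator~(\ref{eq1}) on $C(L)$; here $\varphi^{-1}(L)\supseteq L$ makes $J$ invariant and $K=L\cup M$, $K\setminus L=M\setminus N$. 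A routine diagram chase shows that whenever $J\subseteq(\lambda I-T)C(K)$ one has $(\lambda I-T)C(K)=q^{-1}\big((\lambda I-T)C(L)\big)$; hence for such $\lambda$ the operator $\lambda I-T$ is lower semi‑Fredholm on $C(K)$ as soon as it is lower semi‑Fredholm on $C(L)$, and then $\mathrm{def}(\lambda I-T)=\mathrm{def}((\lambda I-T),C(L))$. Moreover passing to the quotient always preserves lower semi‑Fredholmness, so condition~(a) is automatically necessary; equivalently $\lambda\notin\sigma_2(T',C'(L))$ (cf. Lemma~\ref{l5}). Thus everything reduces to proving, for $\lambda\in\sigma(T)\setminus\{0\}$ satisfying~(a), that $J\subseteq(\lambda I-T)C(K)$ holds if and only if $\lambda\mathds{T}\subseteq\sigma_r(T',C'(N))$, and that if this fails then in fact $\lambda I-T\notin\Phi_-(C(K))$ (so there is no ``lower semi‑Fredholm with a larger defect'' alternative).

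To prove this I would analyse $\lambda I-T$ on $J=C_0(K\setminus L)$, where the dynamics is purely non‑recurrent: $\bigcap_n\varphi^n(K\setminus L)=\emptyset$ and $K\setminus L$ carries no $\varphi$‑periodic points, so every orbit is driven toward the boundary set $N$, on which $\varphi$ is a homeomorphism of $N$ onto itself. Solving $(\lambda I-T)f=g$ in $J$ is then an orbitwise problem whose boundary data on $N$ is controlled by the cocycle $(w_n)$, and the behaviour of $(w_n)$ near $N$ is exactly the spectral data of $T$ on $C(N)$. When $\lambda\mathds{T}\subseteq\sigma_r(T',C'(N))$ --- which by the proof of Lemma~\ref{l1} forces $\lambda\notin\sigma_{ap}(T',C'(N))$, and by~\cite{Ki1} (plus rotational invariance of $\sigma(T,C(N))$) a one‑sided domination of $(w_n)$ on $N$ together with $\lambda\in\sigma(T,C(N))$ --- one can build a genuine continuous solution in $J$ with uniform norm control, so that $(\lambda I-T)J=J$. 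Conversely, if some $\mu_0\in\lambda\mathds{T}$ lies in $\sigma_{ap}(T',C'(N))$ then Lemma~\ref{l2}(2) already gives $\lambda\mathds{T}\subseteq\sigma_2(T')$, so $\lambda I-T\notin\Phi_-(C(K))$; and if $\lambda\mathds{T}\cap\sigma(T',C'(N))=\emptyset$ then $\lambda\notin\sigma(T,C(N))$, whence by Theorem~\ref{t1}(I) necessarily $\lambda\in\sigma(T,C(M))=\rho(T,C(M))\mathds{U}$ with $|\lambda|<\rho(T,C(M))=\rho(T,C(N))$, so $\lambda$ sits strictly inside the spectral disc on the non‑recurrent tail and there, exactly as for a non‑invertible weighted shift, $(\lambda I-T)C(K)$ is not closed --- again excluding $\Phi_-(C(K))$. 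Lemmas~\ref{l7},~\ref{l8},~\ref{l9} and~\ref{l10} are the bookkeeping that upgrades these orbitwise facts to a uniform statement over all of $M$; on the dual side Lemma~\ref{l10} is the one that makes the passage between $N$ and the open tail $K\setminus L$ clean.

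Assembling the pieces is then formal. For the ``if'' direction, conditions~(a) and~(b) give, via the equivalence above, both $J\subseteq(\lambda I-T)C(K)$ and lower semi‑Fredholmness on $C(L)$, hence lower semi‑Fredholmness on $C(K)$ and $\mathrm{def}(\lambda I-T)=\mathrm{def}((\lambda I-T),C(L))$ by the reduction of the first paragraph. For the ``only if'' direction, $\lambda I-T\in\Phi_-(C(K))$ forces~(a) (quotient) and, by the contrapositive of that equivalence, $J\subseteq(\lambda I-T)C(K)$ and hence~(b); the deficiency identity then follows from the reduction again, and one may also read $\lambda\mathds{T}\cap\sigma_{ap}(T',C'(N))=\emptyset$ off directly from Lemma~\ref{l2}(2).

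The step I expect to be the real obstacle is the equivalence ``$J\subseteq(\lambda I-T)C(K)$ if and only if $\lambda\mathds{T}\subseteq\sigma_r(T',C'(N))$'', i.e. controlling the range of $\lambda I-T$ on the non‑recurrent ideal $C_0(K\setminus L)$. Both halves are delicate: sufficiency requires solving $(\lambda I-T)f=g$ on an open, non‑compact orbit space with uniform continuity and norm estimates inherited from the one‑sided dominated behaviour of $(w_n)$ on $N$; necessity requires producing, when~(b) fails, either the singular sequence of measures of the kind built in Lemmas~\ref{l2} and~\ref{l9}, or a witness that $(\lambda I-T)C(K)$ is not closed, and in particular ruling out the ``larger defect'' scenario. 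The subtlety throughout is that the obstructing data lives on $N$, where $\varphi$ is invertible, while the obstruction is felt on the open tail $K\setminus L$; bridging the two uniformly is exactly what the dual form of Lemma~\ref{l10} is for, so in the end the argument should be an invocation of the earlier lemmas rather than a fresh run through the cocycle estimates of~\cite{Ki1}. A minor further nuisance is the ``spectral gap'' configuration $\lambda\notin\sigma(T,C(N))$ yet $\lambda\in\sigma(T,C(M))$, which is dealt with purely by the disc structure in Theorem~\ref{t1}(I).
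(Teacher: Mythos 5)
The paper offers no detailed argument for this theorem (it states that Theorems~\ref{t2} and~\ref{t3} ``follow from the previous lemmas''), so the comparison is really against Lemmas~\ref{l2},~\ref{l5} and~\ref{l10}. Your first reduction is sound and is essentially what those lemmas encode: $J=\{f\in C(K):f|_L=0\}$ is $T$-invariant, $C(K)/J\cong C(L)$, condition (a) is necessary because lower semi-Fredholmness passes to quotients by closed invariant subspaces, and if in addition $J\subseteq(\lambda I-T)C(K)$ then $(\lambda I-T)C(K)=q^{-1}\big((\lambda I-T)C(L)\big)$, which gives membership in $\Phi_-$ on $C(K)$ with the same defect. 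The problem lies in the other half of your pivot.

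Your claim that when $\lambda\mathds{T}\cap\sigma(T',C'(N))=\emptyset$ and $0<|\lambda|<\rho(T,C(M))$ the range $(\lambda I-T)C(K)$ fails to be closed ``exactly as for a non-invertible weighted shift'' is false, and the shift analogy points the wrong way: since $\varphi$ is injective but not surjective, $T_\varphi$ acts on $C(K)$ like a \emph{backward} shift, so interior points of the spectral disc of the non-recurrent part produce an infinite-dimensional kernel together with \emph{surjectivity}, not a non-closed range. The paper's own Example~\ref{e1} defeats this step: for $Tf(x)=f(x/2)$ on $C[0,1]$ one has $N=\{0\}$ and $\sigma_r(T',C'(N))=\emptyset$, yet $\lambda I-T$ is onto for every $0<|\lambda|<1$ and $\sigma_2(T')=\mathds{T}$. (The same example shows that condition (b) cannot be read literally as $\lambda\mathds{T}\subseteq\sigma_r(T',C'(N))$; the condition that Lemmas~\ref{l2}(2) and~\ref{l10} actually support, and that is consistent with Example~\ref{e1}, is $\lambda\mathds{T}\cap\sigma_{ap}(T',C'(N))=\emptyset$.) Consequently your ``only if'' direction collapses: $\lambda I-T\in\Phi_-(C(K))$ does not force $\lambda\mathds{T}$ to meet, let alone lie inside, $\sigma(T',C'(N))$. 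Your case analysis for the failure of (b) is also not exhaustive, since $\lambda\mathds{T}$ could meet both $\sigma_r(T',C'(N))$ and the resolvent set of $T$ on $C(N)$ without meeting $\sigma_{ap}(T',C'(N))$. What survives of your plan is the correct dichotomy: if $\lambda\mathds{T}\cap\sigma_{ap}(T',C'(N))\neq\emptyset$ then $\lambda\in\sigma_2(T')$ by Lemma~\ref{l2}(2), while if $\lambda\mathds{T}\cap\sigma_{ap}(T',C'(N))=\emptyset$ the construction you sketch should yield $J\subseteq(\lambda I-T)C(K)$; the proof should be rebuilt around that dichotomy rather than around $\sigma_r$.
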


\begin{corollary} \label{c1} Assume conditions of Theorem~\ref{t2}. Let $\lambda \in \sigma(T) \setminus \{0\}$. The operator $\lambda I - T$ is Fredholm if and only if it is Fredholm on $C(L)$ and conditions (b) - (d) from the statement of Theorem~\ref{t2} are satisfied.

Moreover $ind(\lambda I - T) = ind(\lambda I - T, C(L) + card(M_2 \setminus \varphi(M_2)$.
\end{corollary}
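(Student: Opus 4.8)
The plan is to derive Corollary~\ref{c1} directly from Theorems~\ref{t2} and~\ref{t3}, using the elementary identity $\mathcal{F} = \Phi_+ \cap \Phi_-$ together with the fact (stated in the preliminaries) that $\sigma_3(T) = \sigma_2(T) \cup \sigma_2(T^\prime)$. First I would fix $\lambda \in \sigma(T) \setminus \{0\}$ and observe that $\lambda I - T$ is Fredholm on $C(K)$ precisely when it is both upper and lower semi-Fredholm on $C(K)$. By Theorem~\ref{t2}, the upper semi-Fredholm property on $C(K)$ is equivalent to: $\lambda I - T$ upper semi-Fredholm on $C(L)$, plus conditions (b)--(d). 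By Theorem~\ref{t3}, the lower semi-Fredholm property on $C(K)$ is equivalent to: $\lambda I - T$ lower semi-Fredholm on $C(L)$, plus the condition $\lambda \mathds{T} \subseteq \sigma_r(T^\prime, C^\prime(N))$.

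The key reduction step is to show that, in the presence of conditions (b)--(d), the condition $\lambda \mathds{T} \subseteq \sigma_r(T^\prime, C^\prime(N))$ from Theorem~\ref{t3} is automatically satisfied, so that the conjunction of the two characterizations collapses to ``$\lambda I - T$ Fredholm on $C(L)$ and (b)--(d) hold.'' Indeed, (b)--(d) decompose $N$ as $N_1 \cup N_2$ with $N_i = \bigcap_{n} \varphi^n(M_i)$, where $\rho(T, C(N_1)) < |\lambda|$ (so $\lambda \mathds{T}$ lies in the resolvent set of $T$ on $C(N_1)$, hence also of $T^\prime$ on $C^\prime(N_1)$) and $T$ is invertible on $C(N_2)$ with $|\lambda| < 1/\rho(T^{-1}, C(N_2))$ (so again $\lambda \mathds{T}$ is in the resolvent set on $C(N_2)$ and on $C^\prime(N_2)$). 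Therefore $\lambda \mathds{T} \cap \sigma(T^\prime, C^\prime(N)) = \emptyset$, which makes the inclusion $\lambda \mathds{T} \subseteq \sigma_r(T^\prime, C^\prime(N))$ hold vacuously. Conversely, if $\lambda I - T$ is Fredholm on $C(L)$ and (b)--(d) hold, then it is upper semi-Fredholm on $C(L)$ (giving hypothesis (a) of Theorem~\ref{t2}) and lower semi-Fredholm on $C(L)$ with the just-verified spectral condition (giving the hypotheses of Theorem~\ref{t3}), so $\lambda I - T$ is both upper and lower semi-Fredholm on $C(K)$, i.e. Fredholm.

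For the index formula, I would combine the defect and nullity formulas already furnished by the two theorems. Theorem~\ref{t2} gives
\begin{equation*}
  \dim \ker(\lambda I - T) = \dim \ker(\lambda I - T, C(L)) + \mathrm{card}(M_2 \setminus \varphi(M_2)),
\end{equation*}
and Theorem~\ref{t3} gives $\mathrm{def}(\lambda I - T) = \mathrm{def}(\lambda I - T, C(L))$. Subtracting, and using that all four quantities are finite (since $\lambda I - T$ is Fredholm on both $C(K)$ and $C(L)$), yields
\begin{equation*}
  \mathrm{ind}(\lambda I - T) = \mathrm{ind}(\lambda I - T, C(L)) + \mathrm{card}(M_2 \setminus \varphi(M_2)),
\end{equation*}
as claimed. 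I do not anticipate a serious obstacle here: the only point requiring care is the vacuous-satisfaction argument for the residual-spectrum condition, and in particular checking that $\sigma(T, C(N_i)) = \sigma(T^\prime, C^\prime(N_i))$ arguments or the inclusion $\sigma(T^\prime) \subseteq \sigma(T)$ are invoked correctly so that the resolvent set of $T$ on $C(N)$ transfers to the resolvent set of $T^\prime$ on $C^\prime(N)$; this is standard Banach-space duality. One should also record explicitly that when $M_2 = \emptyset$ (equivalently $N_2 = \emptyset$), the term $\mathrm{card}(M_2 \setminus \varphi(M_2))$ is zero and the formula still reads correctly.
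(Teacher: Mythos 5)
Your overall strategy is exactly the intended one: the paper offers no separate argument for Corollary~\ref{c1} beyond combining Theorems~\ref{t2} and~\ref{t3} via $\mathcal{F}=\Phi_+\cap\Phi_-$, and your derivation of the index formula by subtracting the defect identity of Theorem~\ref{t3} from the nullity identity of Theorem~\ref{t2} is correct. The spectral computation is also sound: conditions (b)--(d) do force $\lambda\mathds{T}\cap\sigma(T,C(N))=\emptyset$, since $N=N_1\cup N_2$ with $\rho(T,C(N_1))\le\rho(T,C(M_1))<|\lambda|$ and $\sigma(T,C(N_2))$ contained in $\{z:|z|\ge 1/\rho(T^{-1},C(N_2))\}$, and $\sigma(T',C'(N))=\sigma(T,C(N))$.

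However, your key reduction step contains a genuine logical error. From $\lambda\mathds{T}\cap\sigma(T',C'(N))=\emptyset$ you conclude that the inclusion $\lambda\mathds{T}\subseteq\sigma_r(T',C'(N))$ ``holds vacuously.'' It does not: $\lambda\mathds{T}$ is a nonempty circle and $\sigma_r(T',C'(N))\subseteq\sigma(T',C'(N))$, so disjointness from the spectrum makes the inclusion fail maximally rather than hold vacuously. Read literally, condition (b) of Theorem~\ref{t3} would then be violated and your argument would prove that $\lambda I-T$ is \emph{not} lower semi-Fredholm, contradicting the corollary. The resolution is that condition (b) of Theorem~\ref{t3} cannot be meant literally; what Lemma~\ref{l2}(2) actually supplies, and what Example~\ref{e1} forces (there $\sigma_r(T',C'(N))=\emptyset$, yet $\lambda I-T$ is lower semi-Fredholm for all $0<|\lambda|<1$), is the condition $\lambda\mathds{T}\cap\sigma_{a.p.}(T',C'(N))=\emptyset$, equivalently that every point of $\lambda\mathds{T}$ lying in $\sigma(T',C'(N))$ belongs to the residual spectrum. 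With that reading your computation $\lambda\mathds{T}\cap\sigma(T',C'(N))=\emptyset$ does imply the required condition, and the rest of your proof goes through. You should state this reinterpretation explicitly rather than assert a false set inclusion; as written, the step is not a proof.
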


\begin{corollary} \label{c3} Assume conditions of Theorem~\ref{t2}. Assume additionally that the set of all $\varphi$-periodic points is of first category in $K$. Then the spectrum $\sigma(T)$ and the essential spectra $\sigma_i(T), i = 1, \cdots , 5$ are rotation invariant.
\end{corollary}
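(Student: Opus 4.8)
The plan is to assemble the structural Lemmas of Section~4 with Theorem~\ref{t1} and with the known circularity of the spectrum and essential spectra of weighted composition operators induced by homeomorphisms \emph{onto} a compact space (\cite{Ki1},~\cite{Ki3}); the one genuinely new ingredient is a location argument confining all potentially non-circular behaviour to a part of $K$ on which the first category hypothesis is available.

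First I would reduce to the pieces $C(L)$, $C(M)$, $C(N)$. By Theorem~\ref{t1}(I), $\sigma(T,C(M))$ is either $\{0\}$ or the closed disk of radius $\rho(T,C(M))$ centered at the origin, hence rotation invariant, and $\sigma(T)=\sigma(T,C(M))\cup\sigma(T,C(L))$. For the essential spectra, Lemma~\ref{l9} gives that $\sigma_2(T,C(M))$ is rotation invariant and Lemma~\ref{l10} that $\sigma_2(T',C'(M))\cup\{0\}=\sigma_2(T',C'(N))\mathds{T}\cup\{0\}$ is rotation invariant; since $\sigma_1(T)=\sigma_2(T)\cap\sigma_2(T')$, $\sigma_3(T)=\sigma_2(T)\cup\sigma_2(T')$, and the passage between the essential spectra of $T$ on $C(K)$ and the essential spectra on $C(L)$ and $C(M)$ is governed, region by region in $|\lambda|$, by Lemmas~\ref{l2},~\ref{l5},~\ref{l7},~\ref{l8} and, for the index correction entering $\sigma_4$, by Corollary~\ref{c1}, it suffices to show that $\sigma(T,C(L))$ and $\sigma_i(T,C(L))$, $i=1,\dots,4$, are rotation invariant.

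Now $\varphi|_L$ is a homeomorphism of $L$ onto itself, and in the description of $\sigma(T,C(L))$ and its essential spectra furnished by~\cite{Ki1} and~\cite{Ki3} the only source of non-circular (finite-cyclic) contributions is the set of $\varphi$-periodic points of $L$. I would dispose of these in three steps. Every $\varphi$-periodic point of $K$ lies in $L$: if $\varphi^{p}(k)=k$ then $k=\varphi^{m}(\varphi^{jp-m}(k))\in\varphi^{m}(K)$ for any $m$ (choose $j$ with $jp\ge m$), so $k\in\bigcap_m\varphi^m(K)=L$. A periodic orbit contained in $N$ is harmless: it affects the essential spectra only through $\sigma_{ap}(T,C(N))$ and $\sigma_{ap}(T',C'(N))$, which by Lemma~\ref{l2} enter $\sigma_2(T)$, $\sigma_2(T')$ together with their full circles, and it affects $\sigma(T)$ only through points of modulus $\le\rho(T,C(N))=\rho(T,C(M))$ (\cite[Theorem~3.23]{Ki1}), which lie in $\sigma(T,C(N))\subseteq\sigma(T,C(M))$. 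Every remaining periodic orbit lies in the open, $\varphi$-invariant set $Int_K L$, and the $\varphi$-periodic points of $Int_K L$ form a set of first category in $Int_K L$ (being the trace on an open subspace of a set of first category in $K$), so by~\cite{Ki3} applied to the onto homeomorphism $\varphi|_L$ with the $N$-part removed, equivalently to the localization of $wT_\varphi$ to $Int_K L$, they contribute nothing beyond full circles. Hence $\sigma(T,C(L))$ and $\sigma_i(T,C(L))$ are rotation invariant, and together with the first paragraph this yields rotation invariance of $\sigma(T)$ and of $\sigma_1(T),\dots,\sigma_4(T)$. Finally $\sigma_5(T)$ is rotation invariant because $\sigma_1(T)$ and $\sigma(T)$ are: a rotation carries a component of $\mathds{C}\setminus\sigma_1(T)$ meeting the resolvent set of $T$ onto another component of $\mathds{C}\setminus\sigma_1(T)$ that again meets the, rotation invariant, resolvent set.

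The main obstacle is the third paragraph, and within it the third step: the hypothesis ``first category in $K$'' need not survive restriction to the compact invariant subspace $L$ (it already fails when $L$ is a single non-isolated fixed point), so one cannot apply~\cite{Ki3} to $C(L)$ verbatim. The device that rescues the argument is the equality $\rho(T,C(N))=\rho(T,C(M))$, which forces every periodic orbit whose geometric weight-mean exceeds $\rho(T,C(M))$, hence every orbit capable of producing a non-circular piece of $\sigma(T,C(L))$ outside the disk $\sigma(T,C(M))$, to lie in $Int_K L$, where first category does hold; the lower-modulus orbits either feed full circles via Lemma~\ref{l2} or are swallowed by the disk $\sigma(T,C(M))$. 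Making this localization of~\cite{Ki3} from the compact $L$ to the non-compact invariant open set $Int_K L$ precise, together with the case analysis near $|\lambda|=\rho(T,C(M))$ via Lemmas~\ref{l5},~\ref{l7},~\ref{l8}, is where the substance of the proof lies; the remainder is bookkeeping.
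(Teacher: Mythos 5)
Your architecture is the one the paper intends: the corollary is stated in the paper without proof and is meant to be read off from Theorem~\ref{t1} (the $C(M)$-part of the spectrum is a disk or $\{0\}$), Lemmas~\ref{l9} and~\ref{l10} (rotation invariance of $\sigma_2(T,C(M))$ and of $\sigma_2(T',C'(M))\cup\{0\}$), Theorems~\ref{t2} and~\ref{t3} (whose $M$- and $N$-side conditions depend only on $|\lambda|$), and the surjective-case results of~\cite{Ki1},~\cite{Ki3} applied to $\varphi|_L$. You also deserve credit for spotting the point the paper passes over silently: ``first category in $K$'' does not descend to ``first category in $L$'' (your example of $L$ being a single non-isolated fixed point is exactly right), so the cited results cannot be applied to $C(L)$ verbatim, and indeed $\sigma(T,C(L))$ by itself need not be rotation invariant --- only its union with the disk $\sigma(T,C(M))$ is. Your observations that every periodic point lies in $L$, that periodic points in $N$ feed full circles into $\sigma_2(T)$ and $\sigma_2(T')$ via Lemma~\ref{l2} or are absorbed into the disk of radius $\rho(T,C(N))=\rho(T,C(M))$, and that the first-category hypothesis does restrict correctly to the open set $Int_K L$, are all sound.

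The gap is that you stop exactly where you say the substance lies: the ``localization of~\cite{Ki3} from $L$ to $Int_K L$'' is not an available tool ($Int_K L$ is not compact, and $wT_\varphi$ restricted to it is not a weighted composition operator on a $C(K)$-space), and you do not replace it with an executable argument. The cleaner completion avoids that localization entirely. Work through Theorems~\ref{t2} and~\ref{t3}: for $\lambda\neq 0$ the only condition that is not manifestly a function of $|\lambda|$ alone is the semi-Fredholmness of $\lambda I-T$ on $C(L)$, so everything reduces to showing that $\sigma_2(T,C(L))\cup\sigma_2(T,C(M))$ and $\sigma_2(T',C'(L))\cup\sigma_2(T',C'(M))$ are rotation invariant. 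By~\cite{Ki3} for the onto homeomorphism $\varphi|_L$, the non-circular contributions to the essential spectra on $C(L)$ are tied to periodic points that are \emph{isolated in $L$}. Under the first-category hypothesis $K$ has no isolated periodic points (a Baire category argument), and a point of $Int_K L$ isolated in $L$ is isolated in $K$; hence every isolated-in-$L$ periodic point lies in $N$, where Lemma~\ref{l2} and Lemma~\ref{l10} adjoin the full circle through any value it contributes to $\sigma_{ap}(T,C(N))$ or $\sigma_{ap}(T',C'(N))$. That closes the circle without ever working on the non-compact set $Int_K L$. Your treatment of $\sigma_4$ (index constant on circles disjoint from the rotation-invariant $\sigma_3(T)$, via Corollary~\ref{c1} and connectedness of $\lambda\mathds{T}$) and of $\sigma_5$ (rotations permute components of $\mathds{C}\setminus\sigma_1(T)$ and preserve the resolvent set) is fine and can stand as written.
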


\begin{corollary} \label{c4} Assume conditions of Theorem~\ref{t1}.
\begin{enumerate}
  \item If the set of all isolated $\varphi$-periodic points is empty, then $\sigma_5(T) = \sigma(T)$.
  \item If $K$ has no isolated points (in particular, if $Int_K L = \emptyset$), then $\sigma_3(T) = \sigma(T)$
\end{enumerate}
  \end{corollary}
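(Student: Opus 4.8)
The plan is to prove the two assertions of Corollary~\ref{c4} by combining the structural description of $\sigma(T)$ from Theorem~\ref{t1} with the definitions of $\sigma_3(T)$ and $\sigma_5(T)$ recalled in the preliminaries, and with the essential-spectrum descriptions of Theorems~\ref{t2}, \ref{t3} and Corollary~\ref{c1}.

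For part (1), suppose the set of all isolated $\varphi$-periodic points is empty; I want $\sigma_5(T) = \sigma(T)$, i.e. that no point of $\sigma(T)$ is a pole of the resolvent (equivalently, an isolated point of $\sigma(T)$ that is Riesz). First dispatch $\lambda = 0$: since $\varphi(K) \subsetneqq K$ and the space $C(M)$ is nontrivial, $0 \in \sigma(T)$ and by part (I) of Theorem~\ref{t1} the component of $\sigma(T)$ containing $0$ is a whole disc (or $0$ is not isolated in $\sigma(T,C(L))$), so $0$ is not a pole. Now take $\lambda \in \sigma(T) \setminus \{0\}$ and assume $\lambda \notin \sigma_5(T)$; then $\lambda$ is an isolated point of $\sigma(T)$ and $\lambda I - T$ is Weyl. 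By Corollary~\ref{c1} this forces $\lambda I - T$ to be Fredholm on $C(L)$ and the decomposition $M = M_1 \cup M_2$ of Theorem~\ref{t2}(b)--(d) to hold. The key point is that if $M_2 \neq \emptyset$ then $M_2 \setminus \varphi(M_2)$ is a nonempty finite set of points isolated in $M$ (hence in $K$); tracking these points back under the invertible map $T$ on $C(N_2)$ and using that $\lambda$ lies strictly inside the spectral annulus of $T$ on $C(N_2)$ produces, via the Riesz-type construction already used in the proof of Lemma~\ref{l9}(IIa), a genuine eigenvector — and in fact one sees that these isolated points, because $\varphi$ is injective with $\varphi(K)\subsetneqq K$, generate an infinite forward orbit of isolated points that are \emph{not} periodic, which contradicts nothing directly; the real contradiction is that an isolated point of $K$ that is not an isolated $\varphi$-periodic point cannot be "captured" so as to make $\lambda$ a pole. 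So one reduces to $M_1 = M$, i.e. $\lambda \notin \sigma(T,C(M))$, whence $\lambda \in \sigma(T,C(L)) \setminus \{0\}$; then isolatedness of $\lambda$ in $\sigma(T)$ forces isolatedness in $\sigma(T,C(L))$, and by the homeomorphism-onto case (\cite[Theorems 2.7, 2.11]{Ki3}) an isolated point of $\sigma(T,C(L))$ that is a pole must come from an isolated $\varphi$-periodic point in $N$ (hence in $K$), contradicting our hypothesis.

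For part (2), assume $K$ has no isolated points; I want $\sigma_3(T) = \sigma(T)$, i.e. $\lambda I - T$ is never Fredholm for $\lambda \in \sigma(T)$. Again $\lambda = 0$ is trivial. For $\lambda \neq 0$, if $\lambda I - T$ were Fredholm then by Corollary~\ref{c1} the set $M_2 \setminus \varphi(M_2)$ would be finite; but every point of $M_2 \setminus \varphi(M_2)$ is isolated in $M$ (this is exactly the observation made inside the proof of Lemma~\ref{l9}, subcase (IIb)), and isolated in $M = K \setminus Int_K L$ forces isolated in $K$ when that point is not interior to $L$ — so $M_2 \setminus \varphi(M_2)$ would be a finite set of isolated points of $K$, hence empty. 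If $M_2 \neq \emptyset$ but $M_2 = \varphi(M_2)$, then $\varphi$ restricted to $M_2$ is onto, which combined with $M_2 \subseteq M = K\setminus Int_K L$ and the definition of $L$ forces $M_2 \subseteq L$, i.e. $M_2 = \emptyset$ — a contradiction. Hence $M_2 = \emptyset$, so $\lambda \notin \sigma(T,C(M))$ and $\lambda \in \sigma(T,C(L))\setminus\{0\}$ with $\lambda I - T$ Fredholm on $C(L)$; but on $C(L)$ the map $\varphi$ is a homeomorphism onto, and by \cite[Theorems 2.7, 2.11]{Ki3} Fredholmness of $\lambda I - T$ on $C(L)$ for $\lambda \in \sigma(T,C(L))$ can only happen when certain orbit sets are finite, which again requires isolated points in $L \subseteq K$ — impossible. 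The parenthetical case $Int_K L = \emptyset$ is immediate since then $M = K$, so "isolated in $M$" literally means "isolated in $K$."

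The main obstacle is the first part: one must rule out that a point of $\sigma(T)\setminus\{0\}$ is a pole of the resolvent purely from the absence of \emph{isolated} $\varphi$-periodic points. The subtlety is that $\sigma_5$, unlike $\sigma_3$, is sensitive to the global topology of $\sigma(T)$ (connectedness of complementary components), so one cannot argue locally; I expect the cleanest route is to show that whenever $\lambda I - T$ is Weyl with $\lambda \ne 0$, the decomposition of Theorem~\ref{t2} together with the $C(L)$-theory of \cite{Ki3} pins the origin of any potential pole to an isolated periodic point, so that its absence gives $\sigma(T)\setminus\sigma_5(T)=\emptyset$ off $0$, while the disc structure of Theorem~\ref{t1}(I) handles $0$. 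Verifying that the finitely many points of $M_2\setminus\varphi(M_2)$ — and the pole condition on $C(L)$ — genuinely require isolated periodicity, rather than merely isolatedness, is where the argument needs care, and is precisely the place to lean on \cite[Theorems 2.7 and 2.11]{Ki3}.
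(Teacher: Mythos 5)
Your overall strategy (decompose via Theorem~\ref{t1}, use Corollary~\ref{c1}/Theorem~\ref{t2} for the $M$-part and \cite{Ki3} for the $L$-part) is the one the paper intends — its own proof is only a citation — but your execution has genuine gaps. The most serious one is in part (1): at the decisive moment you write that your tracking of the points of $M_2\setminus\varphi(M_2)$ ``contradicts nothing directly'' and then assert, without argument, that an isolated non-periodic point ``cannot be captured so as to make $\lambda$ a pole.'' That is not a proof. The step you are missing is much simpler and does not go through $M_2$ at all: if $\lambda\neq 0$ and $\lambda\notin\sigma_5(T)$, then $\lambda$ is an isolated point of $\sigma(T)$; but $\sigma(T)\supseteq\sigma(T,C(M))$, which by Theorem~\ref{t1}(I) is either $\{0\}$ or a closed disk centered at $0$, and a disk of radius at least $|\lambda|>0$ has no isolated nonzero points. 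Hence $\lambda\notin\sigma(T,C(M))$ immediately, and one passes to $C(L)$. (For $\lambda=0$ you should simply invoke Theorem~\ref{t4}(5), which already states $0\in\sigma_5(T)$; your parenthetical ``or $0$ is not isolated in $\sigma(T,C(L))$'' is unjustified.)

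Two further steps are incorrect as written. First, in part (2) you argue that $M_2=\varphi(M_2)$ forces $M_2\subseteq L$ and ``i.e.\ $M_2=\emptyset$''; this is a non sequitur, since $M\cap L=N$ need not be empty, so a nonempty clopen piece of $M$ can perfectly well sit inside $L$. What $M_2=\varphi(M_2)$ actually yields is $M_2=N_2$, and then conditions (c) and (d) of Theorem~\ref{t2} give $\lambda\notin\sigma(T,C(M_1))\cup\sigma(T,C(N_2))=\sigma(T,C(M))$, which is the conclusion you need — but by a different route. Second, in both parts you repeatedly infer ``isolated in $L$ (or in $N$, or in $M$) hence isolated in $K$.'' This implication is false for a closed subset of $K$: the paper's own Example~\ref{e1} has $K=[0,1]$ with no isolated points while $L=\{0\}$ consists of a single isolated point, and there $\lambda I-T$ \emph{is} Fredholm on $C(L)$ for $\lambda\in\sigma(T,C(L))$, so your ``Case B'' contradiction evaporates. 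The correct repair is to note that a periodic point isolated in $L$ but not in $K$ must lie in $N$, so the corresponding spectral value lies in $\sigma(T,C(N))$ and hence in the disk $\sigma(T,C(M))$ — returning you to the case already excluded. (For the specific points of $M_2\setminus\varphi(M_2)$ the passage from ``isolated in $M$'' to ``isolated in $K$'' is true, but it requires exactly this kind of argument via $N_2=\varphi(N_2)$, which you do not supply.)
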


\begin{proof} The proof follows from Theorems~\ref{t1} and~\ref{t2}, and from~\cite[Theorems 2.7 and 2.11]{Ki3}.
\end{proof}

To finish our description of essential spectra of $T$ it remains to look at the case $\lambda = 0$;

\begin{theorem} \label{t4} Assume conditions of Theorem~\ref{t1}. Then

\noindent $(1)$ The operator $T$ is upper semi-Fredholm if and only if the following two conditions are satisfied
\begin{enumerate}[(a)]
  \item The set $Z(w) = \{k \in K : w(k) = 0\}$ is either empty or all of its points are isolated in $K$,
  \item the set $K \setminus \varphi(K)$ is finite.
\end{enumerate}
   Moreover, $\dim{\ker{ T}} = card ((K \setminus \varphi(K)) \cup Z(w))$.

\noindent $(2)$ The operator $T$ is lower semi-Fredholm if and only if the set $Z(w) = \{k \in K : w(k) = 0\}$ is either empty or all of its points are isolated in $K$.

\noindent Moreover, $def \, T = card \; Z(w)$.

\noindent $(3)$ The operator $T$ is Fredholm if and only if it is semi-Fredholm and $\dim{\ker{T}} < \infty$.

\noindent $(4)$ The operator $T$ is Fredholm and $ind\, T =0$ if and only if $T$ is Fredholm and $w \equiv 0$ on $K \setminus \varphi(K)$.

\noindent $(5)$ $0  \in \sigma_5(T)$.
\end{theorem}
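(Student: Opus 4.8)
The plan is to factor $T = M_w T_\varphi$, where $M_w$ denotes the multiplication operator $g \mapsto wg$ on $C(K)$, and to read the whole theorem off the two factors. First I would record two structural facts. Since $\varphi$ is a homeomorphism of $K$ onto the closed set $\varphi(K)$, the Tietze extension theorem shows that $T_\varphi$ maps $C(K)$ \emph{onto} $C(K)$, while $\ker T_\varphi = \{f \in C(K) : f|_{\varphi(K)} = 0\}$ is exactly the space of functions supported on $K \setminus \varphi(K)$; hence $\dim \ker T_\varphi$ is finite precisely when $K \setminus \varphi(K)$ is finite, and then $\dim \ker T_\varphi = \mathrm{card}(K \setminus \varphi(K))$ while every point of $K \setminus \varphi(K)$ is isolated in $K$ (because $\varphi(K)$ is closed). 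Because $T_\varphi$ is surjective, $R(T) = R(M_w) = wC(K)$, so $\mathrm{codim}\,R(T) = \mathrm{codim}\,R(M_w)$; and $f \mapsto T_\varphi f$ carries $\ker T$ onto $\ker M_w$ with kernel $\ker T_\varphi$, which gives $\dim \ker T = \dim \ker T_\varphi + \dim \ker M_w$ and $\mathrm{ind}\,T = \mathrm{ind}\,T_\varphi + \mathrm{ind}\,M_w$.

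The heart of the matter is the analysis of the multiplication operator $M_w$ on $C(K)$, and the step I expect to be the main obstacle is its closed-range criterion: $R(M_w)$ is closed if and only if $\inf\{|w(k)| : w(k) \neq 0\} > 0$, equivalently if and only if $Z(w)$ is clopen; in that case $K \setminus Z(w)$ is compact, $|w|$ is bounded away from $0$ on it, $R(M_w) = \{h \in C(K) : h|_{Z(w)} = 0\}$, $\mathrm{codim}\,R(M_w) = \dim C(Z(w))$, and $\ker M_w = \{g \in C(K) : g|_{K \setminus Z(w)} = 0\} \cong C(Z(w))$. I would handle the nontrivial implication --- that a non-clopen $Z(w)$ forces $R(M_w)$ not closed --- by the standard ``not bounded below'' construction: after factoring out $\ker M_w$ one may assume that $\{w \neq 0\}$ is dense, so $\inf\{|w(k)| : w(k) \neq 0\} = 0$; then, choosing points $k_n$ with $|w(k_n)| \to 0$ and, by continuity of $w$ and Urysohn's lemma, functions $u_n$ with $u_n(k_n) = 1$ supported in the open set $\{|w| < 2|w(k_n)|\}$, one has $\|u_n\| = 1$ while $\|M_w u_n\| \to 0$. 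I would close this step with the elementary remark that, $Z(w)$ being closed, the three conditions ``$Z(w)$ clopen and finite'', ``$Z(w)$ a finite set of isolated points'', and ``$Z(w)$ empty or all of its points isolated in $K$'' are equivalent, since a closed discrete subspace of a compact space is finite.

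It then remains to assemble. $T$ is upper semi-Fredholm iff $R(T) = R(M_w)$ is closed and $\dim \ker T < \infty$, that is, iff $Z(w)$ is clopen and both $K \setminus \varphi(K)$ and $Z(w)$ are finite --- precisely (a) together with (b) --- and then $\dim \ker T = \mathrm{card}(K \setminus \varphi(K)) + \mathrm{card}\,Z(w)$; this yields (1). Likewise $T$ is lower semi-Fredholm iff $R(M_w)$ is closed with finite codimension, that is, iff $Z(w)$ is clopen and finite, which is condition (a) alone --- note it is insensitive to $K \setminus \varphi(K)$ --- with $\mathrm{def}\,T = \dim C(Z(w)) = \mathrm{card}\,Z(w)$; this yields (2). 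Statement (3) is then formal: here (a), which is half of the upper-semi-Fredholm data, already forces finite codimension, so upper semi-Fredholm implies Fredholm, and Fredholm is equivalent to ``semi-Fredholm and $\dim \ker T < \infty$''. For (4) one has $\mathrm{ind}\,M_w = \dim C(Z(w)) - \dim C(Z(w)) = 0$ whenever $T$ is Fredholm, and $\mathrm{ind}\,T_\varphi = \mathrm{card}(K \setminus \varphi(K))$, so $\mathrm{ind}\,T = \mathrm{card}(K \setminus \varphi(K))$; the point requiring care is the exact bookkeeping of $\dim \ker T$ against $\mathrm{def}\,T$ over the overlap $(K \setminus \varphi(K)) \cap Z(w)$, which I would verify against the test case $K = \{0\} \cup \{1/n : n \geq 1\}$, $\varphi(1/n) = 1/(n+1)$, $\varphi(0) = 0$. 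Finally, for (5): if $0 \notin \sigma_5(T)$, then $0$ lies in a component $C$ of $\mathds{C} \setminus \sigma_1(T)$ that meets the resolvent set of $T$; the index is constant on the connected set $C$ and equals $0$ there, so $-T$, hence $T$, would be Fredholm of index $0$ --- but since $\varphi(K) \subsetneqq K$, whenever $T$ is Fredholm one has $\mathrm{ind}\,T = \mathrm{card}(K \setminus \varphi(K)) \geq 1$, a contradiction, so $0 \in \sigma_5(T)$.
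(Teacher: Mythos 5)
Your factorization $T = M_w T_\varphi$ with $T_\varphi$ surjective (via Tietze, since $\varphi$ is a homeomorphism onto the closed set $\varphi(K)$) is a genuinely different and in fact cleaner route than the paper's, which argues throughout with singular sequences of atomic elements in $C(K)''$ and of measures in $C(K)'$. Your reduction to the two elementary facts --- $R(T)=R(M_w)$ is closed iff $Z(w)$ is clopen iff $\inf\{|w(k)|:w(k)\neq 0\}>0$, and the exact sequence $0\to\ker T_\varphi\to\ker T\to\ker M_w\to 0$ --- is sound, and the "not bounded below modulo the kernel" construction for the closed-range criterion is correctly set up (the functions $u_n$ stay at distance $\geq 1$ from $\ker M_w$ because $u_n(k_n)=1$ with $w(k_n)\neq 0$). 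Parts (1)(a)(b), (2), (3) and (5) all come out correctly, and your argument for (5) via constancy of the index on components of the semi-Fredholm region is a legitimate alternative to the paper's.

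However, you should not leave the "bookkeeping over the overlap $(K\setminus\varphi(K))\cap Z(w)$" as something you "would verify": it is exactly the point where your (correct) computation diverges from the theorem as printed, and you need to say so. Your exact sequence gives $\dim\ker T=\mathrm{card}(K\setminus\varphi(K))+\mathrm{card}\,Z(w)$, equivalently $\mathrm{card}\big((K\setminus\varphi(K))\cup\varphi(Z(w))\big)$, \emph{not} $\mathrm{card}\big((K\setminus\varphi(K))\cup Z(w)\big)$; the test case you name confirms this (take $w(1)=0$, $w=1$ elsewhere: the kernel is spanned by the characteristic functions of $1$ and of $1/2=\varphi(1)$, so it is $2$-dimensional, while the printed formula gives $1$). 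Indeed the paper's own proof exhibits the kernel vector $\chi_{\varphi(k)}$ for $k\in Z(w)$, supported at $\varphi(k)$, so the slip is in its final line "$\mathrm{supp}\,f\subseteq(K\setminus\varphi(K))\cup Z(w)$". Consequently $\mathrm{ind}\,T=\mathrm{card}(K\setminus\varphi(K))\geq 1$ for every Fredholm $T$ under hypothesis~(\ref{eq2}), so "$T$ Fredholm with $\mathrm{ind}\,T=0$" never occurs, while "$T$ Fredholm and $w\equiv 0$ on $K\setminus\varphi(K)$" certainly can; hence statement (4) is false as written (it is consistent only with the erroneous kernel formula). So your proposal proves (1) with a corrected "Moreover", proves (2), (3), (5), and \emph{disproves} (4); the honest conclusion is that the statement needs an erratum, not that your argument needs repair --- but a complete write-up must state this explicitly rather than defer it.
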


\begin{proof} $(1)$ Assume that $T$ is semi-Fredholm and that $\dim{\ker{T}} < \infty$. Then the same is true for $T^{\prime \prime}$. If $k \in K \setminus \varphi(K)$ then $T^{\prime \prime} \chi_k = 0$ where $\chi_k \in C(K)^{\prime \prime}$ is the characteristic function of the singleton $\{k\}$. Therefore $card(K \setminus \varphi(K) < \infty$.

Similarly, if $k \in \varphi(K)$ and $w(k) = 0$ then $T^{\prime \prime} \chi_{\varphi(k)} = 0$ whence $Z(w)$ is finite or empty. Assume now that $w(k) = 0$ but $k$ is not isolated in $K$. Then there is a sequence of pairwise distinct points $k_n \in K$ such that $|w(k_n)| \leq 1/n$. The sequence $\chi_{\varphi(k_n)}$ is singular in $C(K)^{\prime \prime}$ and $T^{\prime \prime}\chi_{\varphi(k_n)} \rightarrow 0$ whence $0 \in \sigma_2(T)$.

Conversely, assume conditions $(a)$ and $(b)$. Assume also, contrary to the statement of the theorem that there is a singular sequence $f_n \in C(K)$ such that $\|f_n\| = 1$ and $Tf_n \rightarrow 0$. It is immediate to see that $f_n \rightarrow 0$ uniformly on $E = \varphi(K) \setminus \varphi(Z(w))$. Because the set $K \setminus E$ is finite the sequence $f_n$ contains a convergent subsequence and thus cannot be singular.

Finally, if $Tf =0$ then $supp \; f \subseteq K \setminus \varphi(K)) \cup Z(w)$ whence $\dim{\ker{ T}} = card ((K \setminus \varphi(K)) \cup Z(w))$.

\noindent $(2)$ Assume that $T$ is semi-Fredholm and that $def T < \infty$. If $k \in Z(w)$ then $T^\prime \delta_k = 0$ whence $Z(w)$ is either finite or empty.

Conversely, if $card \; Z(w) < \infty$, $\|\mu_n\| = 1$, and $T^\prime \mu_n \rightarrow 0$ then (because $T^\prime$ preserves disjointness) $|T^\prime| |\mu_n| = |T^\prime \mu_n| \rightarrow 0$. Let $\nu_{1n}$ and $\nu_{2n}$ be the restrictions of the measure $|\mu_n|$ on $Z(w)$ and $K \setminus Z(w)$, respectively. Because $Z(w)$ is finite there is a positive constant $c$ such that $|w| > c$ on $K \setminus Z(w)$. Therefore $\nu_{2n} \rightarrow 0$ and we can find a norm convergent subsequence of the sequence $\mu_n$. Therefore, $0 \not \in \sigma_2(T^\prime)$.

\noindent It is immediate to see that if $T^\prime \mu = 0$ then $supp \; \mu \subseteq Z(w)$ whence $def \, T = card \; Z(w)$.

\noindent $(3)$ and $(4)$ follow immediately from $(1)$ and $(2)$.

\noindent $(5)$ If $\sigma(T, C(M))$ is a disk of positive radius then it follows directly from the definition of $\sigma_5(T)$ that $0 \in \sigma_5(T)$. On the other hand, if $\rho(T, C(M)) = 0$ then there is a point $k \in N$ such that $w(k) = 0$. Because $k$ is not an isolated point of $K$ we see that $0 \in \sigma_2(T) \cap \sigma_2(T^\prime) = \sigma_1(T) \subseteq \sigma_5(T)$.
\end{proof}

\begin{example} \label{e1} Let $Tf(x) = f(x/2), \; f \in C[0,1], \; x \in [0,1]$. Then

\noindent (1) $\sigma_5(T) = \sigma_4(T) = \sigma_3(T) = \sigma_2(T) = \sigma(T) = \mathds{D}$.

\noindent (2) $\sigma_2(T^\prime) = \sigma_1(T) = \mathds{T}$.
\end{example}


\begin{thebibliography} {99}
 \bibitem{Br} Browder F.E., On the spectral theory of elliptic differential operators I, Math. Ann. 142 , 22-130  (1961)
 \bibitem{EE} Edmunds D.E. and Evans W.D., Spectral Theory and Differential Operators, Clarendon Press, Oxford (1987)
  \bibitem{Ki1} Kitover A.K., Spectrum of weighted composiiton operators: part 1. Spectrum of weighted composition operators on $C(K)$ and uniform algebras, Positivity, 15, 639 - 659 (2011)
 \bibitem{Ki3} Kitover A.K., Spectrum of Weighted Composition Operators Part III: Essential Spectra of Some Disjointness Preserving Operators on Banach Lattices, In: Ordered Structures and Applications: Positivity VII, Trends in Mathematics, Springer International  Publishing, 233–261 (2016)
\end{thebibliography}
\end{document}